\newcommand{\Z}{\mathbb{Z}}
\newcommand{\R}{\mathbb{R}}
\newcommand{\cm}{{\mathfrak{c}}}
\newcommand{\im}{{\mathrm{im}\;}}
\newtheorem{theorem}{Theorem}
\newtheorem{lemma}{Lemma}
\newtheorem{claim}{Claim}
\newtheorem{proposition}{Proposition}
\newtheorem{corollary}{Corollary}
\theoremstyle{definition}
\newtheorem{example}{Example}
\newtheorem{remark}{Remark}
\title{On the second homotopy group of the classifying space for commutativity in Lie groups}
\author{Bernardo Villarreal}
\date{\today}
\begin{document}

\maketitle


\begin{abstract}
In this note we show that the second homotopy group of $B(2,G)$, the classifying space for commutativity for a compact Lie group $G$, contains a direct summand isomorphic to $\pi_1(G)\oplus\pi_1([G,G])$, where $[G,G]$ is the commutator subgroup of $G$. It follows from a similar statement for $E(2,G)$, the homotopy fiber of the canonical inclusion $B(2,G)\hookrightarrow BG$. As a consequence of our main result we obtain that if $E(2,G)$ is 2-connected, then $[G,G]$ is simply-connected. This last result completes how the higher connectivity of $E(2,G)$ resembles the higher connectivity of $[G,G]$ for a compact Lie group $G$.
\end{abstract}

\section*{Introduction}

Let $G$ be a Lie group. A. Adem, F. Cohen and E. Torres-Giese \cite{AdemCohenTorres12} studied a filtration of the classifying space $BG$ associated to the descending central series of a group. The space $B(2,G)$ sitting in the first term of this filtration arises by assembling together the spaces of commuting tuples in $G$. The homotopy fiber of the inclusion $B(2,G)\hookrightarrow BG$, denoted $E(2,G)$ can be thought of as the difference between $B(2,G)$ and $BG$, and in some sense measures how far is $G$ from being abelian. In this note we study the second homotopy group $\pi_2(E(2,G))$ for a compact Lie group $G$. The homotopy classes in $\pi_2(E(2,G))$ for the orthogonal matrix groups $G=\mathrm{O}(n)$,  were previously used in \cite{RamrasVillarreal19} to produce non-standard classes in the reduced commutative orthogonal $K$-theory of closed connected surfaces. This cohomology theory is a variant of orthogonal $K$-theory (as defined in \cite{AdemGomez15} and further studied in \cite{AdGoLiTi15}), whose classes are represented by orthogonal vector bundles equipped with transition functions that point-wise commute. In general for a Lie group $G$, a principal $G$-bundle equipped with such transition functions is said to have a transitionally commutative structure.

Let $[G,G]$ denote the commutator subgroup of $G$. To prove our main result we combine the commutator map $\mathfrak{c}\colon E(2,G)\to B[G,G]$ introduced in \cite{AntolinGritschacherVillarreal21} with the techniques developed in \cite{RamrasVillarreal19} of producing new classifying maps from old ones by inverting cocycles that point-wise commute. For instance, in the case of the special orthogonal groups $\mathrm{SO}(n)$, this procedure can be translated into $\mathfrak{c}_*\colon \pi_2(E(2,\mathrm{SO}(n)))\to \pi_2(B\mathrm{SO}(n))$ being an isomorphism, for every $n\geq 3$. Our main result is to extend these ideas to any compact Lie group.

\begin{theorem}\label{MainTheorem1}
Let $G$ be a compact Lie group. Then:
\begin{enumerate}
\item $\pi_2(E(2,G))$ contains a direct summand isomorphic to $\pi_2(B[G,G])$.
\item If $G$ is connected, then $\mathfrak{c}_*\colon\pi_2(E(2,G))\xrightarrow{\cong} \pi_2(B[G,G])$.
\end{enumerate}
\end{theorem}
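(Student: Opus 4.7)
The plan is to use the commutator map $\mathfrak{c}\co E(2,G)\to B[G,G]$ of \cite{AntolinGritschacherVillarreal21} as the projection onto the desired summand, and to construct a right inverse to $\mathfrak{c}_*$ on $\pi_2$ by producing explicit spherical classes in $E(2,G)$ from commuting tuples in a maximal torus of $[G,G]$. The key point is that every element of $\pi_2(B[G,G])=\pi_1([G,G])$ is represented by a smooth loop $\gamma$ in a maximal torus $T'\subset[G,G]$, and since $T'$ is abelian, $\gamma$ can be paired with itself or its inverse to yield cycles in the commuting-tuples model of $B(2,G)$.

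For part (1), I proceed as follows: given $\gamma\co S^1\to T'$, the pair $(\gamma,\gamma^{-1})$ defines a continuously parameterized family of commuting pairs in $T'$, and through the simplicial model of $B(2,G)$ built from the spaces $\Hom(\Z^n,G)$ this family produces a class $f_\gamma\in\pi_2(B(2,G))$. Its image in $\pi_2(BG)=\pi_1(G)$ is $[\gamma]+[\gamma^{-1}]=0$, so a choice of null-homotopy lifts $f_\gamma$ to a class $\tilde f_\gamma\in\pi_2(E(2,G))$. Applying the technique of inverting point-wise commuting cocycles from \cite{RamrasVillarreal19} to the composite $\mathfrak{c}\circ\tilde f_\gamma$, one verifies directly that $\mathfrak{c}_*(\tilde f_\gamma)=[\gamma]$; the assignment $[\gamma]\mapsto\tilde f_\gamma$ is then the section of $\mathfrak{c}_*$ that splits off the desired direct summand, establishing part (1).

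For part (2), assume $G$ is connected, so $\pi_2(G)=0$ by the classical result for compact Lie groups. The long exact sequence of the fibration $G\to E(2,G)\to B(2,G)\to BG$ yields an injection $\pi_2(E(2,G))\hookrightarrow\pi_2(B(2,G))$. Together with the section from part (1) it remains to verify $\ker\mathfrak{c}_*=0$. A class in this kernel maps to a class in $\pi_2(B(2,G))$ that is simultaneously trivial in $\pi_1(G)$ and in $\pi_1([G,G])$; using the structural decomposition $G=[G,G]\cdot Z(G)^0$ for connected compact $G$, one shows such a class must be supported on the abelian factor $Z(G)^0$, whose commuting-tuple space satisfies $E(2,Z(G)^0)\simeq *$ (since $B(2,T^r)\simeq BT^r$ for a torus $T^r$). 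This forces the class to be zero and yields the desired isomorphism.

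The main obstacle I anticipate is the cocycle-level identity $\mathfrak{c}_*(\tilde f_\gamma)=[\gamma]$ in part (1). This is where the technical work is concentrated, requiring a careful unwinding of the nerve-level definition of the commutator map from \cite{AntolinGritschacherVillarreal21} alongside the cocycle-inversion apparatus of \cite{RamrasVillarreal19}. Given part (1), the remaining step for part (2) should be comparatively short, resting on standard structure theory of compact connected Lie groups together with the vanishing $\pi_2(G)=0$.
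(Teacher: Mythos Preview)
Your plan for part (1) has a genuine gap. The commutator map $\mathfrak{c}_k$ sends $(g_0,\dots,g_k)\mapsto([g_0,g_1],\dots,[g_{k-1},g_k])$, so any simplex whose entries all lie in an abelian subgroup is sent to the degenerate tuple $(1,\dots,1)$. Your class $\tilde f_\gamma$ is built entirely from data in the maximal torus $T'\subset[G,G]$; whatever precise $2$--sphere you extract from the family $(\gamma,\gamma^{-1})$, its image in $E(2,G)$ will factor through $E(2,T')$, and $\mathfrak{c}$ is constant on $E(2,T')$. (In fact, since $\pi_2(G)=0$ the lift $\tilde f_\gamma$ is unique, and the lift through $E(2,T')\simeq *$ shows $\tilde f_\gamma=0$.) So the asserted identity $\mathfrak{c}_*(\tilde f_\gamma)=[\gamma]$ cannot hold; no cocycle manipulation from \cite{RamrasVillarreal19} will rescue it, because the vanishing happens at the level of the simplicial map itself. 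The paper gets nonzero classes in $\mathrm{im}\,\mathfrak{c}_*$ precisely by leaving the torus: for connected $G$ it passes to the normalizer $N_G(T)$ and uses commutators $[\bar q,t]$ with $\bar q$ a lift of a Weyl group element, and for general $G$ it uses auxiliary extensions of finite groups by tori (Propositions \ref{prop-max-rank} and \ref{prop-single-comm}). Note too that the paper does \emph{not} produce a section of $\mathfrak{c}_*$ in part (1); it only shows that $\mathrm{im}\,\mathfrak{c}_*$ has maximal rank and contains the torsion, hence is abstractly isomorphic to $\pi_2(B[G,G])$ and splits off $\pi_2(E(2,G))$.

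Your argument for part (2) is also incomplete. Saying a class in $\ker\mathfrak{c}_*$ ``must be supported on the abelian factor $Z(G)^0$'' needs justification: the decomposition $G=[G,G]\cdot Z(G)^0$ does not by itself give a decomposition of $\pi_2(E(2,G))$, and you have no independent control on the size of that group. The paper argues very differently. It first \emph{computes} $\pi_2(E(2,G))\cong\pi_1(G)$ for semisimple $G$ by analyzing the Moore complex of $n\mapsto H_0(C_n(G);\Z)$, using the Kac--Smilga identification of $\pi_0 C_2(G)$ with $\pi_1(G)$ and Lemma \ref{lemma-almcomtrip} on almost commuting triples (Proposition \ref{prop-pi2E2g}). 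Then it shows $\mathfrak{c}_*$ is surjective via the normalizer of a maximal torus, and concludes $\mathfrak{c}_*$ is an isomorphism by finiteness of $\pi_1(G)$ in the semisimple case. The reduction to the semisimple case is Proposition \ref{lem-e2semisimple}. Your outline skips the Moore--complex computation entirely, and without it there is no way to bound $\pi_2(E(2,G))$ from above.
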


As an application of part 2 of Theorem \ref{MainTheorem1}, in Corollary \ref{cor-tcstdim3} we show that when $G$ is a connected compact Lie group, every principal $G$-bundle over a CW complex of dimension $\leq 3$, with a reduction to $[G,G]$, possesses a transitionally commutative structure.

In a slightly different context, when $G$ is discrete, $E(2,G)$ is homotopy equivalent to the coset poset of abelian subgroups of $G$. Coset posets of families of subgroups were studied by Abels and Holz \cite{AbelsHolz90}, who were interested in their higher connectivity and the algebraic implications over $G$. For instance, the coset poset of the family of abelian subgroups is simply connected if and only if $G$ is abelian (see \cite{Okay15}). 

When $G$ is a compact Lie group, we can also treat $E(2,G)$ as the Lie group analogue of the coset poset of abelian subgroups. In this scenario, in \cite[Theorem 1]{AntolinGritschacherVillarreal21} it is shown that $E(2,G)$ is contractible if and only if $G$ is abelian, and it further asserts that the latter holds if and only if $E(2,G)$ is 4-connected.  As for compact Lie groups, $G$ abelian is equivalent to $[G,G]$ 3-connected, our motivation is then to understand the relation between the higher connectivity of $E(2,G)$ and the higher connectivity of $[G,G]$.

 As a consequence of part 1 of our main result we obtain: 

\begin{corollary}\label{cor-e2g2conn}
Let $G$ be a compact Lie group. If $E(2,G)$ is $2$-connected, then $[G,G]$ is $1$-connected.
\end{corollary}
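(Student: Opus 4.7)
The plan is to extract the statement directly from part 1 of Theorem \ref{MainTheorem1}, using the standard isomorphism $\pi_2(B[G,G]) \cong \pi_1([G,G])$ for a topological group. Assuming $E(2,G)$ is $2$-connected, we have $\pi_2(E(2,G)) = 0$. By part 1 of the theorem, $\pi_2(B[G,G])$ is a direct summand of this trivial group, hence is itself trivial. Since $[G,G]$ is a (well-pointed) topological group, the path-loop fibration gives $\pi_1([G,G]) \cong \pi_2(B[G,G]) = 0$, which is what we need.

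The one mild loose end is that ``$1$-connected'' should include path-connectedness of $[G,G]$. This is free: the commutator map $G \times G \to G$, $(g,h)\mapsto ghg^{-1}h^{-1}$, sends the identity $(e,e)$ to $e$, so its image is a path-connected subset of $G$ containing $e$, and the subgroup it generates (which is $[G,G]$) is therefore path-connected as an ascending union of continuous images of connected spaces. So $[G,G]$ is automatically connected for any topological group $G$, and combining this with $\pi_1([G,G]) = 0$ gives that $[G,G]$ is simply-connected.

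Since Theorem \ref{MainTheorem1}(1) does all the heavy lifting, there is no genuine obstacle in this corollary; the only thing to be careful about is not over-reading ``$1$-connected'' to mean something that requires the hypothesis $\pi_1(E(2,G)) = 0$ or $\pi_0(E(2,G)) = 0$. In fact, the argument uses only the vanishing of $\pi_2(E(2,G))$, so a slightly stronger statement is true: it suffices that $\pi_2(E(2,G)) = 0$ in order to conclude that $[G,G]$ is simply-connected.
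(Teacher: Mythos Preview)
Your use of Theorem~\ref{MainTheorem1}(1) to conclude $\pi_1([G,G])=0$ from $\pi_2(E(2,G))=0$ is correct and matches the paper. The gap is in your argument for path-connectedness of $[G,G]$.

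You write that the image of the commutator map $G\times G\to G$ is a path-connected set containing $e$. This inference requires the domain $G\times G$ to be path-connected, which fails whenever $G$ is disconnected. Indeed the claim that $[G,G]$ is automatically connected is false for compact Lie groups: take any non-abelian finite group, for instance $G=S_3$, where $[G,G]\cong\Z/3$ is a three-point space. Your concluding ``stronger statement'' (that $\pi_2(E(2,G))=0$ alone suffices) is therefore not justified by your argument, and one should not expect it to be true in general.

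The paper's proof handles connectedness differently: it invokes the hypothesis $\pi_1(E(2,G))=0$ (which is part of $2$-connectedness) together with \cite[Proposition~9]{AntolinGritschacherVillarreal21}, which gives that $\mathfrak{c}_*$ is surjective on $\pi_1$. Since $\pi_1(B[G,G])\cong\pi_0([G,G])$, this forces $[G,G]$ to be connected. So the fix is simple: replace your second paragraph with this application of Proposition~9, and drop the final ``stronger statement'' paragraph.
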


By \cite[Proposition 9]{AntolinGritschacherVillarreal21}, the vanishing of the fundamental group $\pi_1(E(2,G))$ implies that $[G,G]$ is connected. Combining this with Corollary \ref{cor-e2g2conn}, \cite[Theorem 1]{AntolinGritschacherVillarreal21} and the fact that $\pi_2([G,G])$ is always trivial, yields for a compact Lie group, that the higher connectivity of $E(2,G)$ completely determines how highly connected is the commutator subgroup. To spell this out precisely: if $E(2,G)$ is $n$-connected, then $[G,G]$ is $(n-1)$-connected for $n=1,2$ and $4$. In this context we then obtain a full interpretation of how does $E(2,G)$ measures how far is $G$ from being abelian. 

Perhaps it is worth mentioning that the analogy does not work with non-compact Lie groups. For example, consider $\mathrm{SL}(2,\R)$ with its maximal compact subgroup $\mathrm{SO}(2)$, which is abelian. By \cite[Theorem 3.1]{AdemGomez15} there is a homotopy equivalence $E(2,\mathrm{SL}(2,\R))\simeq E\mathrm{SO}(2)$ making $E(2,\mathrm{SL}(2,\R))$ contractible, but $[\mathrm{SL}(2,\R),\mathrm{SL}(2,\R)]=\mathrm{SL}(2,\R)$, which is not simply-connected. 

We finish the introduction with the following observation. In \cite[Question {21}]{AntolinGritschacherVillarreal21} the authors asked if the looped commutator map $\Omega\mathfrak{c}\colon\Omega E(2,G)\to[G,G]$ splits, up to homotopy, for a compact Lie group $G$. Our techniques allow us to answer in the affirmative for a family of extensions of finite groups by tori (see Corollary \ref{cor-omegacsplit}), which for instance contains every normalizer $N(T)$ of a maximal torus $T$ in a semisimple connected compact Lie group. 

The paper is organized as follows. In Section \ref{sec-pre} we briefly recall simplicial models of $B(2,G)$ and $E(2,G)$, and argue that, for a connected compact Lie group, the homotopy type of $E(2,G)$ only depends on the semisimple part of $G$. In Section \ref{Section Gcon}, for semisimple connected compact Lie groups, we prove the existence of an isomorphism $\pi_2(E(2,G))\cong \pi_1(G)$ by analyzing the Moore complex associated to the simplicial abelian group $(n\mapsto H_0(C_n(G);\Z))$, where $C_n(G)$ is the space of commuting $n$-tuples in $G$. In Section \ref{Section extens} we study the fundamental group of the commutator subgroup of extensions of finite groups by a torus, and describe its generators via the commutator map. In Section \ref{SectionProofMainTh} we prove Theorem \ref{MainTheorem1}, and the applications mentioned above.

\subsection*{Acknowledgements}
The author would like to thank O. Antol\'in-Camarena for pointing out Lemma \ref{lemma-augmidquo}, and S. Gritschacher for useful conversations, particularly Example \ref{prop-pi2cm-surj-conn}. The author acknowledges support from Universidad Nacional Aut\'onoma de M\'exico (UNAM) in the program ``Becas de Posdoc DGAPA.'' 

\paragraph{Some conventions and notation.} Throughout this paper we will use the following conventions:
\begin{itemize}
\item For a pair of elements $x,y$ in a group $G$, the commutator $[x,y]$ will mean $x^{-1}y^{-1}xy$.
\item For a compact Lie group $G$, $G_0$ will denote the connected component of the identity, and $Z(G)$ will denote the center of $G$. 
\end{itemize}

\section{A few preliminaries}\label{sec-pre}

Let $G$ be Lie group. Consider the space of commuting $k$-tuples 
\[C_k(G)=\{(g_1,...,g_k)\in G^k:[g_i,g_k]=1\}\,,\]
 with its subspace inclusion $i_k\colon C_k(G)\hookrightarrow G^k$. A a simplicial model $B_\bullet G$ of the classifying space $BG$ with $B_kG=G^k$, is the nerve of $G$, where $G$ is taken as a topological category with a single object. In this model the simplicial structure of $B_\bullet G$ is compatible with every inclusion $i_k$, so that defining $C_0(G)=\text{pt}$ the family $\{C_k(G)\}_{k\geq 0}$ assembles into a simplicial space $C_\bullet(G)$ where the induced inclusion $i_\bullet\colon C_\bullet(G)\to B_\bullet G$ becomes a simplicial map. Taking geometric realization the classifying space for commutativity is defined as
\[B(2,G):=|C_\bullet(G)|\,,\]
and is equipped with a canonical inclusion $i:=|i_\bullet|\colon B(2,G)\to BG$. (See \cite{AdemCohenTorres12} for more details). The space $E(2,G)$ is defined as the homotopy fiber of $i$, that is, the pullback of the universal principal $G$-bundle $EG\to BG$ along $i$. A simplicial model for $E(2,G)$ is as follows. For every $k\geq 0$ let
\[E_k(2,G)=\{(g_0,...,g_k): \langle g_0^{-1}g_1,...,g_{k-1}^{-1}g_k\rangle\;\text{is abelian}\}\,,\]
and $E(2,G)=|E_\bullet(2,G)|$. It comes with a projection $p\colon E(2,G)\to B(2,G)$, given as the realization of the simplicial map $E_\bullet(2,G)\to C_\bullet(G)$ where for every $k\geq 1$, $(g_0,...,,g_k)\mapsto (g_0^{-1}g_1,...,g_{k-1}^{-1}g_k)$. See \cite[Section 2]{AntolinGritschacherVillarreal21} for more details.

We record 2 properties of these spaces. First, if $G$ is abelian clearly $B(2,G)=BG$, and hence $E(2,G)=EG$ which is contractible. Secondly, by construction $B(2,G)$ is connected, and the first term in the skeletal filtration of $E(2,G)$ is homotopy equivalent to the suspension $\Sigma (G\wedge G)$, so that $E(2,G)$ is connected, as well.

Now we will argue that the problem of studying the homotopy type of $E(2,G)$ for connected compact Lie groups $G$ can be reduced to semisimple connected compact Lie groups (i.e. its Lie algebra is a direct sum of simple Lie algebras). 

\begin{lemma}\label{lem-e2centralproduct}
Suppose $H$ and $K$ are Lie groups or discrete groups, where $H$ is non-trivial and abelian. Suppose further that $Z$ is a common central subgroup, and consider the central product $G=H\times_{Z} K$ where $Z\subset H\times K$ via $z\mapsto (z,z^{-1})$. Then the inclusion $K\hookrightarrow G$ induces a homotopy equivalence
\[E(2,K)\xrightarrow{\simeq}E(2,G)\,.\]
\end{lemma}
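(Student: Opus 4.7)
My plan is to exploit the short exact sequence of topological groups $1 \to K \to G \to Q \to 1$, where $Q := H/Z$ is abelian (since $H$ is abelian). For any such extension one has the classifying fibration $BK \to BG \to BQ$.

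The crucial step is to construct an analogous fibration $B(2,K) \to B(2,G) \to BQ$ on the classifying spaces for commutativity. I would do this at the simplicial level: the simplicial map $C_\bullet(G) \to C_\bullet(Q) = B_\bullet Q$ induced by the quotient $G \twoheadrightarrow Q$ is, at each simplicial level $n$, a locally trivial fibre bundle with fibre $C_n(K)$. A local trivialization over $U_1 \times \cdots \times U_n \subset Q^n$ is built from continuous local sections $s_i \colon U_i \to H$ of the principal $Z$-bundle $H \to Q$: since $H$ is central in $G$, every element of $G$ lying over $\bar h_i \in U_i$ admits a unique representative of the form $[(s_i(\bar h_i), k_i)]$ with $k_i \in K$, and the commutator identity $[[(h_1,k_1)],[(h_2,k_2)]] = [(1,[k_1,k_2])]$ in $G$ shows that commutativity of an $n$-tuple in $G$ is equivalent to commutativity of its $K$-components in $K$. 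A standard realization theorem for simplicial (quasi-)fibrations --- e.g., Segal's or May's criterion --- should then yield the desired (quasi-)fibration on geometric realization.

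Having both fibrations $B(2,K) \to B(2,G) \to BQ$ and $BK \to BG \to BQ$ over the common base $BQ$, together with the natural map between them given by the canonical inclusions $B(2,-) \hookrightarrow B(-)$ on the first two terms and the identity on $BQ$, the induced square with top row $B(2,K) \to B(2,G)$, bottom row $BK \to BG$, and vertical maps the canonical inclusions, is a homotopy pullback. Taking homotopy fibres of the vertical maps then gives the desired weak equivalence $E(2,K) \xrightarrow{\simeq} E(2,G)$.

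The main technical obstacle is the simplicial realization step above: passing from the levelwise local triviality of $C_\bullet(G) \to B_\bullet Q$ to a (quasi-)fibration on geometric realization. This will require verifying compatibility of the local trivializations with the simplicial face and degeneracy maps, or appealing to an appropriate properness hypothesis on the simplicial spaces involved so that the chosen realization criterion applies.
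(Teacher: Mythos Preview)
Your approach is correct and genuinely different from the paper's. You use the extension $1\to K\to G\to H/Z\to 1$ and aim at levelwise fibre bundles $C_n(G)\to (H/Z)^n$ with fibre $C_n(K)$; the paper instead uses the central extension $1\to Z\to H\times K\to G\to 1$ and shows that $H^n\times C_n(K)\to C_n(G)$ is a principal $Z^n$-bundle for each $n$ (this is the pullback of $(H\times K)^n\to G^n$ along $C_n(G)\hookrightarrow G^n$, using the same commutator computation you indicate). Because $Z$ is abelian, the simplicial group $n\mapsto Z^n$ realizes to the topological abelian group $BZ$, so the realization is immediately a principal $BZ$-bundle $BH\times B(2,K)\to B(2,G)$, and the comparison of fibre sequences over the \emph{fibre} $BZ$ (rather than over the base $BQ$) finishes the argument. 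The paper's route thus sidesteps exactly the technical obstacle you flag: it never needs a general quasi-fibration realization criterion, only the elementary fact that simplicial principal bundles for simplicial abelian groups realize to principal bundles. Your route is perfectly viable and arguably more direct conceptually, but you would indeed have to verify, say, that $C_\bullet(G)\to B_\bullet(H/Z)$ satisfies the hypotheses of May's or Segal's theorem (properness of $C_\bullet(G)$ and that the relevant comparison maps of fibres are equivalences), which is extra work the paper avoids.
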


\begin{proof} 
Let $\pi\colon H\times K\to G$ be the quotient map. We claim that the commutative diagram
\begin{align}
\xymatrix{
H^n\times C_n(K)\ar[d]^{\pi^n|}\ar@{^(->}[r]&(H\times K)^n\ar^{\pi^n}[d]\\
C_n(G)\ar@{^(->}[r]&G^n
\,,}\label{cen-decomp-hom}
\end{align}
is a pullback square. Notice that if $(\pi(h_1,k_1),...,\pi(h_n,k_n))\in G^n$ is a commuting $n$-tuple, then $[(h_i,k_i),(h_j,k_j)]\in Z$ for every $1\leq i,j\leq n$, but since the $h_i$'s pairwise commute, the only possible value of these commutators is $(1,1)\in Z$. In particular $[k_i,k_j]=1$ for every $1\leq i,j\leq n$ and the claim now follows. The restriction $\pi^n|$ is then a principal $Z^n$-bundle since $\pi^n$ is a principal bundle, as well. 

Upon taking geometric realization of (\ref{cen-decomp-hom}) we again obtain a pullback square and hence a principal $BZ$-bundle $BH\times B(2,K)\to  B(2,G)$. Now the inclusion $B(2,G)\to BG$ being natural induces a map of fiber sequences
\[
\xymatrix{BZ\ar[r]\ar@{=}[d]&BH\times B(2,K)\ar[d]\ar[r]&B(2,G)\ar[d]\\
BZ\ar[r]&BH\times BK\ar[r]&BG\,,}
\]
and taking vertical homotopy fibers gives the claimed homotopy equivalence.  
\end{proof}

Let $G$ be a compact Lie group and $H,K\subset G$ closed subgroups. Let us briefly recall that the commutator subgroup $[H,K]\subset G$ is the topological closure of the algebraic commutator subgroup $[H,K]^{\text{alg}}$, but since $G$ is compact, $[H,K]=[H,K]^{\text{alg}}$.
 

\begin{proposition}\label{lem-e2semisimple}
Let $G$ be a connected compact Lie group. Then the inclusion $[G,G]\hookrightarrow G$ induces a homotopy equivalence $E(2,[G,G])\xrightarrow{\simeq} E(2,G)$.
\end{proposition}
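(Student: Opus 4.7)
The plan is to deduce this directly from the preceding Lemma \ref{lem-e2centralproduct}, using the standard structure theory of connected compact Lie groups. Recall that for such a $G$ we have the classical decomposition $G = Z(G)_0 \cdot [G,G]$, where $Z(G)_0$ is a torus (the identity component of the center) and $[G,G]$ is semisimple. Concretely, the multiplication map
\[
Z(G)_0 \times [G,G] \longrightarrow G
\]
is a surjective homomorphism whose kernel is the finite subgroup $\{(z,z^{-1}) : z \in Z(G)_0 \cap [G,G]\}$, so $G$ is realized as the central product $G \cong Z(G)_0 \times_{Z} [G,G]$ with $Z := Z(G)_0 \cap [G,G]$.

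First I would handle the trivial case: if $Z(G)_0$ is the trivial group, then $G$ is semisimple, hence $G = [G,G]$ and the claim is immediate. Otherwise $Z(G)_0$ is a non-trivial torus, in particular a non-trivial abelian Lie group.

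Next I would check the hypotheses of Lemma \ref{lem-e2centralproduct} with $H = Z(G)_0$, $K = [G,G]$, and the common central subgroup $Z$ above. Centrality in $H$ is automatic since $H$ is abelian, and centrality in $K$ follows from $Z(G) \subseteq Z([G,G])$ (elements of $Z(G)$ commute with everything in $G$, hence with all of $[G,G]$). The embedding $Z \hookrightarrow H \times K$ as $z \mapsto (z, z^{-1})$ is precisely the kernel described above, so the central product matches $G$.

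Applying the lemma then yields that the inclusion $[G,G] \hookrightarrow G$ induces a homotopy equivalence $E(2,[G,G]) \xrightarrow{\simeq} E(2,G)$, as desired. The only point that requires a little care is the identification of $G$ with the central product $Z(G)_0 \times_Z [G,G]$, i.e., the verification that every element of $G$ can be written as a product of an element of $Z(G)_0$ and one of $[G,G]$ with the expected relation; this is the main structural input, and I expect it to be the only non-routine step, but it is a standard consequence of the fact that the quotient $G / [G,G]$ is a torus covered by $Z(G)_0$.
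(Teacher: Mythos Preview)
Your proposal is correct and follows essentially the same route as the paper: both use the structure decomposition $G = Z(G)_0\,[G,G]$ to realize $G$ as the central product $Z(G)_0 \times_Z [G,G]$ with $Z = Z(G)_0 \cap [G,G]$, and then invoke Lemma~\ref{lem-e2centralproduct}. Your treatment is slightly more careful in that you separately dispose of the case $Z(G)_0 = 1$ (needed since the lemma requires $H$ non-trivial) and explicitly verify that $Z$ is central in $[G,G]$.
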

\begin{proof}
Let $Z(G)_0$ denote the path connected component of the identity of the center of $G$. A well known decomposition in the theory of connected compact Lie groups (e.g. \cite[Theorem 4.29]{Knapp}) is $G=Z(G)_0[G,G]$. In this decomposition the commutator subgroup $[G,G]$ is the maximal semisimple subgroup of $G$. In particular the product map $\pi\colon Z(G)_0\times [G,G]\to G$ is a covering homomorphism with kernel $Z(G)_0\cap [G,G]=Z$. Notice that $G$ can then be decomposed as the central product $Z(G)_0\times_{Z}[G,G]$, where the central subgroup $Z$ is included in $Z(G)_0\times[G,G]$ via $z\mapsto(z,z^{-1})$. The result now follows from Lemma \ref{lem-e2centralproduct}.
\end{proof}

\section{Calculation of $\pi_2(E(2,G))$ for connected compact Lie groups}\label{Section Gcon}

Recall that when $G$ is a connected compact Lie group, $[G,G]$ is a semisimple connected compact Lie group, and if further $G$ is semisimple, then the commutator subgroup $[G,G]=G$. In light of Proposition \ref{lem-e2semisimple}, to study $E(2,G)$, hereon we may assume that $G$ is a semisimple connected compact Lie group, if necessary. 

The next step is to understand the Moore complex associated to the simplicial abelian group $n\mapsto H_0(C_n(G);\Z)$. To do this we will need the following two lemmas.

\begin{lemma}\label{lemma-augmidquo}
Let $A$ be a discrete group and let $W$ be the augmentation ideal of the group ring $\Z A$. Then $W_A$ the module of $A$-coinvariants of $W$ is isomorphic as an abelian group to the abelianization of $A$, that is, $W_A=W/\langle w-aw:a\in A\rangle\cong A/[A,A]$.
\end{lemma}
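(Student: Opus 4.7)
My plan is to construct mutually inverse abelian group homomorphisms
\[
\bar\phi \co W_A \to A/[A,A], \qquad \bar\psi \co A/[A,A] \to W_A,
\]
exploiting the fact that $W$ admits a $\Z$-basis $\{a - 1 : a \in A,\, a \neq 1\}$ (this follows from the standard basis of $\Z A$ by the change of basis $a \mapsto a-1$ together with $1$).

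First I would build $\phi\co W \to A/[A,A]$ by setting $\phi(a - 1) = [a]$ on basis elements and extending $\Z$-linearly. The key verification, and the main technical step of the proof, is that $\phi$ descends to the quotient $W_A$. Since the defining relations are the elements $bw - w$ for $b \in A$ and $w \in W$, and since everything in sight is $\Z$-linear, it suffices to check compatibility on basis elements $w = a - 1$. The algebraic expansion
\[
b(a-1) - (a-1) = (ba - 1) - (b - 1) - (a - 1)
\]
is sent by $\phi$ to $[ba] - [b] - [a] = 0$ in $A/[A,A]$, so $\phi$ factors through the desired $\bar\phi$.

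In the other direction, I would define a set map $\psi\co A \to W_A$ by $a \mapsto [a-1]$ and verify it is a group homomorphism. The computation
\[
(ab - 1) - (a - 1) - (b - 1) = ab - a - b + 1 = a(b - 1) - (b - 1)
\]
shows that the defect of $\psi$ under multiplication is a coinvariant relation, hence vanishes in $W_A$, so $\psi(ab) = \psi(a) + \psi(b)$. Because $W_A$ is abelian, $\psi$ factors through the abelianization to yield $\bar\psi\co A/[A,A] \to W_A$.

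Finally, I would confirm $\bar\phi \circ \bar\psi$ and $\bar\psi \circ \bar\phi$ are the identity by evaluating on the generators $[a]$ and $[a-1]$, both checks being tautological. The main obstacle, if one can call it that, is the algebraic identity $(ab-1) - (a-1) - (b-1) = a(b-1) - (b-1)$, which simultaneously controls both the descent of $\phi$ and the homomorphism property of $\psi$; conceptually, it is what converts the multiplicative relation $[ab] = [a][b]$ in $A/[A,A]$ into the additive relation in $W_A$ forced by the coinvariant quotient. One could alternatively reformulate the argument by observing that $\langle bw - w : b\in A,\, w \in W\rangle$ coincides with the square $W^2$ of the augmentation ideal, and then invoke the classical isomorphism $W/W^2 \cong A/[A,A]$.
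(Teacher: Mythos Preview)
Your argument is correct. You construct explicit mutually inverse homomorphisms between $W_A$ and $A/[A,A]$ using the standard $\Z$-basis $\{a-1\}$ of $W$, and the single identity
\[
(ab-1)-(a-1)-(b-1)=a(b-1)-(b-1)
\]
does exactly the work you claim: it kills the coinvariant relations under $\phi$ and simultaneously shows $\psi$ is multiplicative. This is essentially the classical textbook proof (and your closing remark that the coinvariant subgroup coincides with $W^2$ is also accurate).

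The paper takes a different, more homological route. It applies the long exact sequence in group homology to the short exact sequence of $A$-modules $0\to W\to \Z A\to \Z\to 0$, then uses that $H_i(A;\Z A)\cong H_i(EA;\Z)$ vanishes for $i>0$ and that $H_0(A;\Z A)\to H_0(A;\Z)$ is an isomorphism, to read off $W_A=H_0(A;W)\cong H_1(A;\Z)=A/[A,A]$. Your approach is more elementary and self-contained, requiring no homological machinery; the paper's approach is shorter once one has the long exact sequence at hand and situates the lemma naturally in the ambient simplicial/homological framework of the article. Either is perfectly adequate for this standard fact.
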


\begin{proof}

Consider the short exact sequence $0\to W\to \Z A\to \Z\to 0$ and the induced long exact sequence in homology
\[H_1(A;\Z A)\to H_1(A;\Z)\to H_0(A;W)\to H_0(A;\Z A)\to H_0(A;\Z)\;.\]
Recall that $H_i(A;\Z A)\cong H_i(EA;\Z)$ for every $i\geq 0$, where $EA$ is the contractible total space of the universal principal $A$-bundle $EA\to BA$. Then notice that $H_0(A;\Z A)\to H_0(A;\Z)$ is an isomorphism since it can be realized as $H_0(-;\Z)$ of $EA\to BA$. From the above long exact sequence we thus have an isomorphism $A/[A,A]=H_1(A;\Z)\xrightarrow{\cong}H_0(A;W)= W_A$.
\end{proof}

Let $K$ be a subgroup $K\subset Z(G)$, where $Z(G)$ is the center of $G$. We denote the space of almost commuting $n$-tuples relative to $K$ by $B_n(G,K)=\{(g_1,...,g_n): [g_i,g_j]\in K\}$.

\begin{lemma}\label{lemma-almcomtrip}
Let $G$ be a semisimple simply-connected compact Lie group, and $K\subset Z(G)$. Then for every pair of elements $c_1,c_2\in K$, there is a triple $(g_1,g_2,g_3)\in B_3(G,K)$ such that $[g_2,g_3]=c_{1}$, $[g_1,g_3]=c_{2}$ and $[g_1,g_2]=1$. 
\end{lemma}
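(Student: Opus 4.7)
The plan is to realize the required commutators via the action of a Coxeter element of the Weyl group on a maximal torus. Let $T\subset G$ be a maximal torus with Lie algebra $\mathfrak{t}$; since $G$ is connected and $T$ is a maximal torus, $Z(G)\subset T$, hence also $K\subset T$. Let $W=N_G(T)/T$ be the Weyl group. Because $G$ is semisimple simply-connected and compact, $\mathfrak{g}$ splits as a sum of simple ideals and $W=W_1\times\cdots\times W_r$ factors along them. I would choose $w\in W$ to be the product of a Coxeter element in each $W_i$, and fix a lift $\tilde w\in N_G(T)$. The classical eigenvalue formula for a Coxeter element of an irreducible Weyl group gives its eigenvalues on $\mathfrak{t}_i$ as primitive roots of unity of the form $e^{2\pi i m_j/h_i}$, with $m_j\in\{1,\dots,h_i-1\}$, so $1$ is never an eigenvalue; therefore $w^{-1}-1$ acts as an invertible linear map on $\mathfrak{t}$.

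The key observation is that for $y\in T$ one has
\[
[y,\tilde w]=y^{-1}\tilde w^{-1}y\tilde w=y^{-1}\,w^{-1}(y),
\]
which identifies the map $T\to T$, $y\mapsto [y,\tilde w]$, with the group endomorphism of $T$ induced by the linear map $w^{-1}-1\colon \mathfrak{t}\to \mathfrak{t}$. Invertibility of $w^{-1}-1$ on $\mathfrak{t}$ implies that this endomorphism of $T$ is a surjective (finite-kernel) covering of $T$ by itself; in particular every $c\in Z(G)\subset T$ equals $[y,\tilde w]$ for some $y\in T$.

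To assemble the triple, given $c_1,c_2\in K$ I would pick $g_1,g_2\in T$ with $[g_1,\tilde w]=c_2$ and $[g_2,\tilde w]=c_1$, and set $g_3=\tilde w$. Since $T$ is abelian, $[g_1,g_2]=1$, and the three pairwise commutators $1, c_1, c_2$ all lie in $K$, so $(g_1,g_2,g_3)\in B_3(G,K)$ satisfies the required identities. The only nontrivial input beyond formal manipulations is the fixed-point freeness of the Coxeter action on $\mathfrak{t}$; this is standard, and it is what makes the argument uniform and avoids any case-by-case analysis of the simple factors.
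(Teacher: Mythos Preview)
Your argument is correct and genuinely different from the paper's. The paper proceeds by cases: for cyclic $K$ it invokes the existence of a Heisenberg pair $(g,h)$ with $[g,h]=c$ for a generator $c$, then uses powers $(g^j,g^i,h)$; for the non-cyclic centers $Z(\mathrm{Spin}(4k))\cong(\Z/2)^2$ it argues separately via $\mathrm{Spin}(4)\cong\mathrm{SU}(2)\times\mathrm{SU}(2)$; finally it passes to products of simple factors coordinatewise. Your approach instead takes a single lift $\tilde w$ of a product of Coxeter elements and exploits the fixed-point-freeness of $w$ on $\mathfrak{t}$ to make $y\mapsto[y,\tilde w]$ a surjective self-cover of $T$, so \emph{every} element of $Z(G)\subset T$ is hit. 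This is more uniform and conceptually cleaner: no Heisenberg pairs, no $\mathrm{Spin}(4k)$ detour, and the semisimple case is automatic since a product of Coxeter elements still has no eigenvalue $1$. One small inaccuracy: the eigenvalues $e^{2\pi i m_j/h_i}$ need not all be \emph{primitive} $h_i$-th roots (e.g.\ in type $D_4$ one exponent is $m_j=3$ with $h=6$, giving $-1$); what matters, and what you use, is only that $m_j\in\{1,\dots,h_i-1\}$ forces $1$ never to occur.
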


\begin{proof}
Suppose $K$ is cyclic and let $c$ be a generator. We can find a pair $(g,h)\in G^2$ such that $[g,h]=c$, for example by \cite[Appendix A]{KacSmilga} which shows how to construct Heisenberg pairs in $G$. Then $(1,c^i,c^j)$ can be realized for instance by the associated triple of commutators of $(g^j,g^i,h)\in G^3$.

Among simple Lie groups, the only ones that have non-cyclic center are the spin groups $\mathrm{Spin}(4k)$, for $k\geq2$, whose center is $\Z/2\times \Z/2$. Consider $\mathrm{Spin}(4)\cong \mathrm{SU}(2)\times\mathrm{SU}(2)$. One can readily check that every triple in $Z(\mathrm{SU}(2)\times\mathrm{SU}(2))=\Z/2\times\Z/2$ can be realized by almost commuting triples of $\mathrm{SU}(2)\times\mathrm{SU}(2)$. Then the diagonal block matrix inclusion $\mathrm{SO}(4)\hookrightarrow \mathrm{SO}(4k)$ induces a map $\delta\colon \mathrm{Spin}(4)\to \mathrm{Spin}(4k)$, which is an isomorphism restricted to the center. Thus any triple of $Z(\mathrm{Spin}(4k))$ can be realized by a triple of commutators of elements in the image of $\delta$.  

Without loss of generality suppose $G$ is a product of two simple groups $G=G_1\times G_2$. Pick $(a_1,b_1),(a_2,b_2)\in Z(G)=Z(G_1)\times Z(G_2)$. The previous argument gives us triples $(g_1,g_2,g_3)\in B_3(G_1,Z(G_1))$ and $(h_1,h_2,h_3)\in B_3(G_2,Z(G_2))$ that realize $(1,a_1,a_2)$ and $(1,b_1,b_2)$, respectively. Then $((g_1,h_1),(g_2,h_2),(g_3,h_3))\in B_3(G,Z(G))$ realizes $((1,1),(a_1,b_1),(a_2,b_2))\in Z(G)^3$.
\end{proof}

Let $(D,\partial)$ be the Moore complex associated to the simplicial group $H_0(C_\bullet(G);\Z)$. In \cite[Lemma 4.2]{RamrasVillarreal19} it is shown that in our context $H_2(B(2,G);\Z)\cong H_1(G;\Z)\oplus H_2(D,\partial)$. Since both $B(2,G)$ and $E(2,G)$ are simply connected when $G$ is connected (e.g. \cite[Lemma 4.3]{AdGoLiTi15}), and $\pi_2(B(2,G))\cong \pi_2(BG)\oplus\pi_2(E(2,G))$ (e.g. \cite[Theorem 6.3]{AdemCohenTorres12}), we conclude that \[\pi_2(E(2,G))\cong H_2(D,\partial)\,.\]

\begin{proposition}\label{prop-pi2E2g}
Let $G$ be a semisimple connected compact Lie group. Then $\pi_2(E(2,G))\cong \pi_1(G)$. 
\end{proposition}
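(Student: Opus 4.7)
The plan is to compute $H_2(D,\partial)$ directly, using the isomorphism $\pi_2(E(2,G))\cong H_2(D,\partial)$ recorded above, by analyzing the low-degree pieces of $A_\bullet:=H_0(C_\bullet(G);\Z)$ in terms of $K:=\pi_1(G)$. Here $K$ is a finite abelian subgroup of the center of the universal cover $\widetilde G\to G$, since $G$ is semisimple compact.

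First I would identify $A_2$ and the face maps out of it. Since $G$ is connected, $A_0=A_1=\Z$. For $(g_1,g_2)\in C_2(G)$, the commutator $[\tilde g_1,\tilde g_2]\in K$ of any lift to $\widetilde G^2$ is well-defined (since $K$ is central) and locally constant, giving a map $\pi_0(C_2(G))\to K$; it is surjective by the Heisenberg-pair construction used in the proof of Lemma \ref{lemma-almcomtrip}, and injective by connectedness of the almost-commuting fibers $\{[g_1,g_2]=c\}\subset\widetilde G^2$. Thus $A_2\cong\Z[K]$, and all three face maps $A_2\to A_1=\Z$ coincide with the augmentation $\epsilon$ since $G$ is connected. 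Hence $D_2=W$, the augmentation ideal of $\Z[K]$, and $\partial_2=0$.

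The core claim is $\im(\partial_3)=W^2$, where $W^2$ is the submodule of $W$ generated by the products $(k-1)(k'-1)$ for $k,k'\in K$; equivalently $W^2=\langle w-kw:k\in K,\,w\in W\rangle$. Granted this, Lemma \ref{lemma-augmidquo} together with $K$ being abelian yields
\[
H_2(D,\partial)=W/W^2\cong W_K\cong K/[K,K]=K=\pi_1(G).
\]
For $\im(\partial_3)\subseteq W^2$, consider the abelian-group homomorphism $L\co\Z[K]\to K$ (with $K$ written additively) defined by $[k]\mapsto k$; it factors the quotient $W\twoheadrightarrow W/W^2\cong K$, so $\ker L\cap W=W^2$. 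Using the central-commutator identities $[xy,z]=[x,z][y,z]$ and $[x,yz]=[x,y][x,z]$, a class $[P]\in\pi_0(C_3(G))$ with pairwise commutators $(c_{12},c_{13},c_{23})\in K^3$ has face images $d_0[P]=[c_{23}]$, $d_1[P]=[c_{13}c_{23}]$, $d_2[P]=[c_{12}c_{13}]$, $d_3[P]=[c_{12}]$ in $A_2$; then $L\circ(d_0-d_1+d_2-d_3)[P]=c_{23}-(c_{13}+c_{23})+(c_{12}+c_{13})-c_{12}=0$, so the identity $L\circ d_0=L\circ d_1-L\circ d_2+L\circ d_3$ holds on all of $A_3$. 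On $D_3=\bigcap_{i\geq1}\ker d_i$ the right-hand side vanishes, forcing $\partial_3(D_3)\subseteq W\cap\ker L=W^2$. For the reverse $\im(\partial_3)\supseteq W^2$, fix $k,k'\in K$; Lemma \ref{lemma-almcomtrip} produces classes in $\pi_0(C_3(G))$ realizing any commutator triple whose first entry is $1$, and the alternating sum
\[
X=[1,1,kk']-[1,1,k]-[1,k,k']+[1,k,1]\in A_3
\]
(where $[c_{12},c_{13},c_{23}]$ denotes such a realizing class) is readily checked to satisfy $d_1(X)=d_2(X)=d_3(X)=0$ and $\partial_3(X)=d_0(X)=(k-1)(k'-1)$; as $k,k'$ vary these generate $W^2$.

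The main obstacle will be making the $\supseteq$ step precise: $\pi_0(C_3(G))$ is not in general classified by pairwise commutator patterns, so one must verify that each of the four classes appearing in $X$ is indeed realized by Lemma \ref{lemma-almcomtrip} (after accounting for its permutation conventions) and that the face computations above hold on these specific classes, without requiring a closed form for all of $A_3$.
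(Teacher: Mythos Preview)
Your argument follows essentially the same route as the paper's: both identify $A_2\cong\Z[K]$ via Kac--Smilga, recognize $\ker\partial_2$ as the augmentation ideal, use Lemma~\ref{lemma-almcomtrip} to manufacture boundaries of the form $(k-1)(k'-1)$, and invoke Lemma~\ref{lemma-augmidquo} to identify the resulting quotient with $K=\pi_1(G)$. The paper computes in the unnormalized complex and only exhibits the $\supseteq$ inclusion $\mathrm{im}\,\partial_3\supseteq W^2$ explicitly before asserting that the quotient is the coinvariants; your normalized-complex version, with the linear map $L$ and the observation $\ker L\cap W=W^2$, supplies the opposite containment that the paper leaves implicit, so your write-up is if anything slightly more complete.
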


\begin{proof}
We will show that $H_2(D,\partial)\cong \pi_1(G)$. We are interested in the following portion of the chain complex $(D,\partial)$:
\[H_0(C_3(G);\Z)\xrightarrow{\partial_3}H_0(C_2(G);\Z)\xrightarrow{\partial_2}H_0(G;\Z)\xrightarrow{}0\,.\]

 A result of Kac and Smilga \cite[Remark A1]{KacSmilga} says that the path-connected components of $C_2(G)$ are in bijection with the elements of $\pi_1(G)$, so that under this correspondence we can identify $H_0(C_2(G);\Z)$ with the group ring $\Z\pi_1(G)$. As $G$ is connected, the differential $\partial_2$ is the augmentation map with respect to $\Z\pi_1(G)$. Hence $\ker \partial_2$ is the augmentation ideal. 

To analyze $\partial_3$, let us consider $\widetilde{G}$, the universal cover of $G$. Then $\widetilde{G}/Z(\widetilde{G})=G$, so that for every element in $C_k(G)$ there is a lift to $B_k(\widetilde{G},Z(\widetilde{G}))$ with respect to the projection $\widetilde{G}\to G$. Since $\widetilde{G}$ is a semisimple simply-connected compact Lie group, for every triple of the form $(1,\eta_1,\eta_2)\in\pi_1(G)^3\cong Z(\widetilde{G})^3$, Lemma \ref{lemma-almcomtrip} asserts that we can find a triple $(g_1,g_2,g_3)\in C_3(G)$ and a lift $(\tilde{g_1},\tilde{g_2},\tilde{g_3})\in B_3(\tilde{G};Z(\widetilde{G}))$ such that its associated triple of commutators is $([\tilde{g_1},\tilde{g_2}]=1,[\tilde{g_2},\tilde{g_3}]=\eta_1,[\tilde{g_1},\tilde{g_3}]=\eta_2)$. 

Now, let us label a connected component of $C_3(G)$ with associated triple of commutators $(1,\eta_1,\eta_2)$ by $c_{(1,\eta_1,\eta_2)}$. To understand what the value of $\partial_3(c_{(1,\eta_1,\eta_2)})$ is, we need to analyse what the commutators of the pairs $d_i(\tilde{g_1},\tilde{g_2},\tilde{g}_3)$ are, where $d_i\colon C_3(G)\to C_2(G)$ are the face maps and $0\leq i\leq 3$. Since the commutators are central, we can easily verify that for each $i$ we have: 

\begin{itemize}
\item $i=0$. $[\tilde{g_2},\tilde{g}_3]=\eta_1$
\item $i=1$. $[\tilde{g_1}\tilde{g_2},\tilde{g}_3]=\eta_2\eta_1$
 \item $i=2$. $[\tilde{g_1},\tilde{g_2}\tilde{g}_3]=\eta_2$
 \item $i=3$. $[\tilde{g_1},\tilde{g}_2]=1$
\end{itemize}
We conclude that the image of these generators under $\partial_3$ is given by
\[\partial_3(c_{(1,\eta_1,\eta_2)})=\eta_1-\eta_2\eta_1+\eta_2-1=\eta_1(1-\eta_2)-(1-\eta_2)\,.\]
As the elements $1-\eta_2$ form a basis of $\ker\partial_2$, $\ker\partial_2/\mathrm{im}\;\partial_3$ is precisely the group of coinvariants $W_{\pi_1(G)}$. The proposition now follows from Lemma \ref{lemma-augmidquo}.
\end{proof}

\begin{remark}\label{rem-pi2e2gconn}
Proposition \ref{lem-e2semisimple} and Proposition \ref{prop-pi2E2g} then yield an isomorphism $\pi_2(E(2,G))\cong\pi_1([G,G])$, for every connected compact Lie group $G$.
\end{remark}

\begin{remark}
When $G$ is not connected, there is no isomorphism between $\pi_2(E(2,G))$ and $\pi_2(B[G,G])$. For instance consider $G=\mathrm{O}(2)$, where $[G,G]=\mathrm{SO}(2)$. In \cite[Theorem 1.5]{A-CGV20} it is shown that $E(2,\mathrm{O}(2))\simeq S^2\vee S^2\vee S^3$, and it follows that $\pi_2(E(2,\mathrm{O}(2)))\cong \Z^2$, whereas $\pi_2(B\mathrm{SO}(2))=\Z$. However, we will see that $\pi_2(E(2,G))$ splits off $\pi_2(B[G,G])$. 
\end{remark}

As mentioned before, by \cite[Theorem 6.3]{AdemCohenTorres12}, the following corollary is an immediate consequence of Remark \ref{rem-pi2e2gconn}.

\begin{corollary}
Let $G$ be a connected compact Lie group. Then $\pi_2(B(2,G))\cong \pi_1(G)\oplus\pi_1([G,G])$.
\end{corollary}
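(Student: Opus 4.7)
The plan is to string together three inputs: the splitting from \cite[Theorem 6.3]{AdemCohenTorres12}, the computation of $\pi_2(E(2,G))$ obtained in Remark \ref{rem-pi2e2gconn}, and the classical identification of $\pi_2(BG)$ with $\pi_1(G)$.

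First I would invoke the cited theorem of Adem--Cohen--Torres-Giese, which asserts that for any Lie group $G$ the inclusion $i\colon B(2,G)\hookrightarrow BG$ admits a stable retraction after suitable looping, or more precisely that the long exact homotopy sequence of the fibration $E(2,G)\to B(2,G)\xrightarrow{i} BG$ splits in low degrees, giving
\[
\pi_2(B(2,G))\cong \pi_2(BG)\oplus \pi_2(E(2,G)).
\]
This is exactly the input referenced just above the corollary and already used in Section \ref{Section Gcon}, so I would simply cite it.

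Next I would recall the standard identification $\pi_2(BG)\cong \pi_1(G)$, which holds for any connected topological group (since $\Omega BG \simeq G$, and both are connected when $G$ is connected). Finally I would substitute Remark \ref{rem-pi2e2gconn}, which gives $\pi_2(E(2,G))\cong \pi_1([G,G])$ for every connected compact Lie group $G$. Plugging both into the displayed splitting yields
\[
\pi_2(B(2,G))\cong \pi_1(G)\oplus \pi_1([G,G]),
\]
as desired.

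There is essentially no obstacle here: all the nontrivial work has already been done in Proposition \ref{lem-e2semisimple} and Proposition \ref{prop-pi2E2g}, which together produce Remark \ref{rem-pi2e2gconn}. The only thing to be careful about is making sure the hypotheses of \cite[Theorem 6.3]{AdemCohenTorres12} apply in this generality (connected compact Lie group), and that the cited splitting is natural enough to identify the summand $\pi_2(BG)$ with $\pi_1(G)$ via the usual loop-space equivalence. Both are routine.
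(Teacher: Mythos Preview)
Your argument is correct and matches the paper's own proof essentially verbatim: the paper states that the corollary is an immediate consequence of Remark \ref{rem-pi2e2gconn} together with \cite[Theorem 6.3]{AdemCohenTorres12}, using precisely the splitting $\pi_2(B(2,G))\cong\pi_2(BG)\oplus\pi_2(E(2,G))$ and the identification $\pi_2(BG)\cong\pi_1(G)$ that you invoke.
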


\begin{remark}
Let $G$ be a semisimple simply-connected compact Lie group. Proposition \ref{prop-pi2E2g} then asserts that $\pi_2(E(2,G))= 0$,  and hence $\pi_2(B(2,G))$ is trivial, as well. As $\pi_3(BG)=0$, it follows that $H_3(B(2,G);\Z)\cong\pi_3(B(2,G))\cong \pi_3(E(2,G))$. By inspection of the spectral sequence $E_{p,q}^2=H_pH_q(C_\bullet(G))\Longrightarrow H_{p+q}(B(2,G))$, we can conclude that $\pi_3(E(2,G))\cong H_3(H_0(C_\bullet(G);\Z))$. For the classical simple Lie groups $\mathrm{SU}(n)$ and $\mathrm{Sp}(m)$, it is known that $\pi_3(E(2,G))=0$, as the spaces $C_k(G)$ are path-connected. But this is not known in general. For instance the space of commuting triples $C_3(\mathrm{Spin}(7))$ is not path-connected, so $\pi_3(E(2,\mathrm{Spin}(7)))$ could potentially be non-trivial.
\end{remark}

\section{$\pi_2(E(2,G))$ for extensions of a finite group by a torus}\label{Section extens}

In this section we will focus on studying compact Lie groups $G$ with abelian $G_0$. We can set up as follows. 

Let $G$ be an extension of a finite group $Q$ by a torus $T=(S^1)^k$. Pick an element $q\in Q$, and let $\bar{q}\in G$ be a lift of $q$ under the quotient map. Recall that we use the commutator convention $[x,y]=x^{-1}y^{-1}xy$. We can write the commutator $[\bar{q},-]\colon T\to T$ as a product of maps $[c_{\bar{q}}\circ(-)^{-1}]\cdot Id$, where $c_{\bar{q}}\colon T\to T$ is conjugation by $\bar{q}$ and $(-)^{-1}$ is inversion in $T$. As $T$ is abelian, conjugation is independent of the choice of a lift, hence 
\[\psi(q):=[\bar{q},-]\] 
is a well defined group homomorphism $T\to T$ for ever $q\in Q$. The induced homomorphism in $\pi_1$ is then
\begin{align*}
\psi(q)_*\colon&\Z^k\to \Z^k\\ 
&x\mapsto x - \pi_1(c_{\bar{q}})(x)
\end{align*}

\begin{lemma}\label{lemma-pi1-comm}
Let $G$ be an extension of a finite group $Q$ by a torus $T$. Let $[G,G]_0$ be the connected component of the identity of the commutator subgroup. Then 
\begin{enumerate}
\item Every element in $[G,G]_0$ is a product of the form $\psi(q_1)(t_1)\cdots\psi(q_n)(t_n)$, where $q_i\in Q$ and $t_i\in T$.
\item The subgroup $\sum_{q\in Q}\im\psi(q)_*\subset\pi_1([G,G]_0)$ is of maximal rank.
\end{enumerate}
\end{lemma}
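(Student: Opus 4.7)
The plan is to prove part 1 by identifying $[G,G]_0$ with the subgroup $H\subseteq T$ generated by all products $\psi(q_1)(t_1)\cdots\psi(q_n)(t_n)$, and then to deduce part 2 from the fact that $H$ and $[G,G]_0$ are then subtori of the same dimension.

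First I would observe that each $\psi(q)\co T\to T$ is a Lie group homomorphism (using that $T$ is abelian), so $\im\psi(q)$ is a subtorus of $T$. Hence $H=\sum_{q\in Q}\im\psi(q)$ is a closed connected subgroup of $T$. A direct computation gives $g\,\psi(q)(t)\,g^{-1}=\psi(gqg^{-1})(gtg^{-1})$, so $H$ is normal in $G$; being connected and contained in $[G,G]$, it lies in $[G,G]_0$.

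For the reverse inclusion, I would pass to the quotient $G/H$. Because $\psi(q)(t)\in H$ for every $q\in Q$ and $t\in T$, conjugation by any lift $\bar q$ acts trivially on $T/H$; in other words, $T/H$ is central in $G/H$. Two consequences follow: any commutator of elements $\bar q_1 t_1,\bar q_2 t_2\in G$ reduces modulo $H$ to $[\bar q_1,\bar q_2]$, and this class in $G/H$ depends only on $q_1,q_2\in Q$. Hence $[G/H,G/H]$ is algebraically generated by the finite set $\{[\bar q_1,\bar q_2]:q_1,q_2\in Q\}$, and in particular it is countable as an abstract group. By the paper's earlier remark applied to the compact Lie group $G/H$, this algebraic commutator subgroup is already closed, so its identity component is a connected compact Lie group that is countable as an abstract group. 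But every non-trivial compact connected Lie group contains a circle and is therefore uncountable, so $[G/H,G/H]_0$ is trivial and $[G/H,G/H]$ is finite. Consequently $[G,G]_0$ maps trivially into $G/H$, i.e.\ $[G,G]_0\subseteq H$, finishing part 1.

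For part 2, part 1 gives $[G,G]_0=H=\sum_{q\in Q}\im\psi(q)$, and each summand is a subtorus of $T$. Thus $\sum_{q\in Q}\im\psi(q)_*\subseteq\pi_1(T)$ sits inside $\pi_1(H)=\pi_1([G,G]_0)$, and after tensoring with $\R$ it spans the Lie algebra of $H$. In particular it has rank equal to $\dim H=\mathrm{rank}\,\pi_1([G,G]_0)$, giving the maximal rank claim.

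The main technical point I anticipate is the observation that a closed, abstractly finitely generated subgroup of a compact Lie group must be finite; this rests on the cardinality fact that every non-trivial compact connected Lie group is uncountable, combined with the paper's remark that algebraic commutator subgroups are already closed in the compact setting.
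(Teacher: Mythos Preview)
Your argument is correct and follows a genuinely different route from the paper's. For part~1, the paper works constructively: it invokes the Borel--Serre lemma to choose lifts $\bar q$ of finite order, expands an arbitrary commutator $[\bar p s,\bar q t]$ via an explicit identity into $[\bar p,\bar q]$ times a product of $\psi$-factors, notes that the residual piece $r=\prod_i[\bar q_i,\bar p_i]$ then lies in a finite group and hence has finite order, and finally, for each circle factor $S^1_j\subset[G,G]_0$, kills $r$ by taking $N$-th powers and exhibits a specific product $\psi(q_1)(-)\cdots\psi(q_m)(-)$ covering $S^1_j$. Your approach instead identifies the candidate subtorus $H=\sum_q\im\psi(q)$ directly, shows that $T/H$ is central in $G/H$, and concludes that $[G/H,G/H]$ is finitely generated and therefore finite by a cardinality argument; this is cleaner and avoids both Borel--Serre and the commutator calculus. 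The cost is that the paper's version yields explicit byproducts --- the commutator identity and, for each circle factor, a concrete covering by a product of $\psi$'s --- and this constructive output is precisely what is reused in the proof of Proposition~\ref{prop-single-comm} to build the cocycles there. For part~2 the two arguments are essentially the same, yours phrased via Lie algebras and the paper's via the surjective product homomorphism $\prod_{q\in Q}\psi(q)\colon T^{|Q|}\to[G,G]_0$.
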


\begin{proof}
Borel and Serre \cite[Lemma 5.1, footnote p. 152]{BorelSerre64} noticed that when $G$ is a compact Lie group that is not connected there exists a finite group $F\subset G$ such that $G=FG_0$. That is, we can choose the lifts $\bar{q}$ to be of finite order. As $G$ is compact, the elements in $[G,G]$ are finite products of commutators. We can use the identity $t\bar{q}=\bar{q}c_{\bar p}(t)$ to write any single commutator as
\begin{align} 
 [\bar{p}s,\bar{q}t] &= [\bar{p},\bar{q}]c_{[\bar{p},\bar{q}]}(s^{-1})c_{\bar{q}^{-1}\bar{p}\bar{q}}(t^{-1})c_{\bar{q}}(s)t\label{1}\\
 &= [\bar{p},\bar{q}]\psi([p,q])(s)\psi(q^{-1}pq)(t)\psi(q)(s^{-1})\label{2}\,,
 \end{align}
 where $p,q\in Q$ and $s,t\in T$. Any finite product of these can be rearranged similarly as in (\ref{1}) to an  expression of the form $[\bar{q}_1,\bar{p}_1]\cdots[\bar{q}_m,\bar{p}_m]c(s_1,t_1,...,s_m,t_m)$, where $c(s_1,t_1,...,s_m,t_m)$ is a product of conjugates of $s_i,t_i\in T$ by words in terms of lifts of $p_i,q_i\in Q$.  Now since $[G,G]_0\subset T$ and $c(s_1,t_1,...,s_m,t_m)$ can be path-connected to the identity we see that $r=[\bar{q}_1,\bar{p}_1]\cdots[\bar{q}_m,\bar{p}_m]\in T$ and has finite order. Thus we have an exhaustive description of the elements of $[G,G]_0$. Finally, similarly as in (\ref{2}), we see that they are all products of the form $r\psi(q_1)(t_1)\cdots\psi(q_n)(t_n)$ for some $q_i\in Q$ and $t_i\in T$.

  Now given a group decomposition of $[G,G]_0$ into $S^1$'s, we can choose $r\psi(q_1)(t_1)\cdots\psi(q_m)(t_m)\in S^1_j$ of infinite order, where $S^1_j$ is the $j$-th $S^1$-factor of $[G,G]_0$. Then as each $\psi(q_i)$ is a homomorphism, taking $N=\text{ord}(r)$, we have $\psi(q_1)(t_1^N)\cdots\psi(q_m)(t_m^N)\in S^1_j$, and since it is still of infinite order, $\psi(q_1)(-)\cdots\psi(q_m)(-)$ must cover $S^1_j$. Indeed, let $Y\subset T\times \cdots\times T$ be a circle subgroup, such that $(t_1^N,...,t_m^N)\in Y$ (for example take $Y$ as the closure of a 1-parameter subgroup through $(t_1^N,...,t_m^N)$). Then \[Y\xrightarrow{\psi(q_1)(-)\cdots\psi(q_m)(-)}S^1_j\] is a covering map, and part 1 follows.
 
 For part 2, consider the homomorphism \[\underbrace{T\times\cdots\times T}_{q\in Q}\xrightarrow{\prod_{q\in Q} \psi(q)} [G,G]_0\,,\]
 which by part 1 is surjective. Since it is a continuous homomorphism between products of $S^1$'s, and $\text{rank}([G,G]_0)\leq\text{rank}(T)$, the induced map in fundamental groups is of maximal rank, as desired.
\end{proof}

\begin{remark}
In general $\psi(q)_*$ may not be surjective. For example consider $G=O(2)$, where $T=SO(2)=[G,G]$, and $Q=\Z/2$. If $\tau\in \Z/2$ is a generator, then $[\overline{\tau},r_{\theta}]=r_{2\theta}$, where $r_\theta\in SO(2)$ is rotation by $\theta$. It follows that $\psi(\tau)_*\colon \Z\to \Z$ is multiplication by 2.
\end{remark}

\subsection{Cocycles and associated clutching function.}\label{subsec-cover}

Let $\mathcal{C}=\{C_1,C_2,C_3\}$ be the closed cover of $S^2$ given by $C_1=\{\vec{x}\in S^2:x_0\leq 0\}$, $C_2=\{\vec{x}\in S^2:x_0\geq0\text{ and }x_2\geq 0\}$ and $C_3=\{\vec{x}\in S^2:x_0\geq0\text{ and }x_2\leq 0\}$. Let $G$ be a topological group. A $G$-valued \emph{commutative cocycle} over $\mathcal{C}$ is a cocycle 
\[\alpha=\{\alpha_{12}\colon C_1\cap C_2\to G,\;\alpha_{13}\colon C_1\cap C_3\to G,\;\alpha_{23}\colon C_2\cap C_3\to G\}\]
satisfying the usual cocycle equation with the added condition that the commutators 
 \[[\alpha_{12}(x),\alpha_{23}(x)]=[\alpha_{12}(x),\alpha_{13}(x)]=[\alpha_{23}(x),\alpha_{13}(x)]=1\in G\]
  for any $x\in C_1\cap C_2\cap C_3$ (of course one can generalize this definition to any cover). Note that each non-degenerate double intersection is a semicircle and the triple intersection consists of 2 points --- `front’ $(0,1,0$) and `back' $(0,-1,0)$.  

 By definition, a commutative cocycle $\alpha$ will give rise to a simplicial map after applying the nerve functor $N\alpha\colon N(\mathcal{C})\to B_\bullet G$, where $N(\mathcal{C})$ is the \u{C}ech complex as considered in \cite{DuggerIsaksen}. It will factor through the inclusion $i_\bullet \colon B_\bullet(2,G)\to B_\bullet G$. That is, we obtain a map 
\[f_\alpha\colon S^2\xrightarrow{\simeq}|N(\mathcal{C})|\xrightarrow{|N(\alpha)|}B(2,G)\]
Note that when $\alpha$ is commutative we can point-wise invert $\alpha$ in $G$ and still get a cocycle which we may call the \emph{inverse cocycle} and will denote it by $\alpha^{-1}$. Then we have the identity $\phi^{-1}f_\alpha=f_{\alpha^{-1}}$.

To a $G$-valued cocycle $\alpha$ over $\mathcal{C}$ one can associate a $G$-bundle $E_\alpha\to S^2$. By \cite[Lemma 3.3]{RamrasVillarreal19} $if_\alpha$ classifies $E_\alpha$. Moreover, by \cite[Lemma 3.2]{RamrasVillarreal19} $E_\alpha$ is clutched by the map $\varphi_{\alpha}\colon S^1\to G$ given by
\[
\varphi_\alpha(x)=\left\{
\begin{matrix}
\alpha_{12}(x)\alpha_{23}(x)&\text{if }x\in C_1\cap C_2\\
\alpha_{13}(x)&\text{if }x\in C_1\cap C_3
\end{matrix}
\right.
\]
(where we are identifying $(C_1\cap C_2)\cup (C_1\cap C_3)$ with $S^1$). We refer to $\varphi_\alpha$ as \emph{the clutching function induced by} $\alpha$. 

As both $\varphi_\alpha$ and $if_\alpha$ classify the same bundle there is a relation between their homotopy classes. When considered as pointed maps, this relation is attained by the natural isomorphism $\pi_2(BG)\cong \pi_1(G,1_G)\cong \pi_1(G,\varphi_\alpha(1_{S^1}))$.

\subsection{The commutator map via commutative cocycles}

For a brief moment let us assume $G$ is any Lie group. Recall the simplicial model of $E(2,G)$ whose $n$-simplices are $E_n(2,G)=\{(g_0,...,g_n): \langle g_0^{-1}g_1,...,g_{n-1}^{-1}g_n\rangle\;\text{is abelian}\}$ described in Section \ref{sec-pre}. Such an $(n+1)$-tuple in $E_n(2,G)$ is called affinely commutative. This model is particularly useful because it allows us to construct \emph{the commutator map} which has proven to be a powerful tool to study homotopical properties of $E(2,G)$. Precisely, the commutator map $\mathfrak{c}$ is defined simplicially as
\begin{align*}
E_k(2,G)&\xrightarrow{\mathfrak{c}_k} B_k[G,G]\\
(g_0,...,g_k)&\mapsto ([g_0,g_1],...,[g_{k-1},g_k])  
\end{align*}
for every $k\geq 1$, and $\mathfrak{c}:=|\mathfrak{c}_\bullet|\colon E(2,G)\to B[G,G]$ (see \cite[Section 3]{AntolinGritschacherVillarreal21} for more details).

\begin{proposition}\label{prop-max-rank}
Let $G$ be an extension of a finite group $Q$ by a torus $T$. Then the image of the commutator map \[\mathfrak{c}_*\colon \pi_2(E(2,G))\to \pi_2(B[G,G])\] 
is of maximal rank.
\end{proposition}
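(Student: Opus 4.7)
The plan is to use Lemma~\ref{lemma-pi1-comm}(2) together with the commutative cocycle machinery of Subsection~\ref{subsec-cover}. Since $\sum_{q\in Q}\im\psi(q)_*$ has maximal rank in $\pi_1([G,G]_0)\cong \pi_2(B[G,G])$, it suffices to show that for each $q\in Q$ and each loop $\gamma\co S^1\to T$ based at $1$, the element $\psi(q)_*[\gamma]=[\gamma]-c_{\bar{q},*}[\gamma]$ lies in $\im\mathfrak{c}_*$. I will exhibit, for each such pair $(q,\gamma)$, a class $\beta_{q,\gamma}\in\pi_2(E(2,G))$ with $\mathfrak{c}_*\beta_{q,\gamma}=\psi(q)_*[\gamma]$.

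First I would define, on the cover $\mathcal{C}$ of $S^2$, a commutative $G$-valued cocycle $\alpha$ by letting $\alpha_{12}\co C_1\cap C_2\to T$ trace out the loop $\gamma$ (so $\alpha_{12}(p_\pm)=1$), setting $\alpha_{23}\equiv \bar{q}$ on $C_2\cap C_3$, and letting $\alpha_{13}\co C_1\cap C_3\to G$ trace out $c_{\bar{q}}(\gamma)\cdot \bar{q}$ (so $\alpha_{13}(p_\pm)=\bar{q}$). Pairwise commutativity at the two triple-intersection points $p_\pm$ is automatic since $\alpha_{12}(p_\pm)=1$, and the cocycle equation $\alpha_{13}=\alpha_{12}\alpha_{23}$ at $p_\pm$ reduces to $\bar{q}=1\cdot\bar{q}$. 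A direct computation shows that, after translation by $\bar{q}^{-1}$, the associated clutching function $\varphi_\alpha\co S^1\to G$ is the concatenation of $\gamma$ with the reverse of $c_{\bar{q}}(\gamma)$, and hence $[\varphi_\alpha]=\psi(q)_*[\gamma]\in\pi_1(G)=\pi_1(T)$.

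Because this class is generically nontrivial, $f_\alpha\co S^2\to B(2,G)$ does not lift to $E(2,G)$ by itself. To correct for this I would pair $\alpha$ with a $T$-valued (hence automatically commutative) cocycle $\alpha'$ on a second copy of $S^2$ designed so that $[\varphi_{\alpha'}]=-\psi(q)_*[\gamma]\in\pi_1(T)$; for instance take $\alpha'_{13}=\alpha'_{23}=1$ and let $\alpha'_{12}$ be the time-reverse of $\psi(q)\circ\gamma$. Then $[f_\alpha]+[f_{\alpha'}]\in\pi_2(B(2,G))$ is sent to $0\in\pi_2(BG)=\pi_1(G)$, and the long exact sequence of the fibration $G\to E(2,G)\to B(2,G)$, combined with $\pi_2(G)=0$ for any Lie group, yields a unique lift $\beta_{q,\gamma}\in\pi_2(E(2,G))$.

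Finally, to compute $\mathfrak{c}_*\beta_{q,\gamma}$, one exploits that $\alpha'$ is $T$-valued: every pairwise commutator occurring in the simplicial description of the $\alpha'$-part of the lift is trivial, and hence the $\alpha'$-contribution to $\mathfrak{c}_*\beta_{q,\gamma}$ vanishes. The surviving $\alpha$-contribution is computed by feeding its $2$-simplices through the simplicial formula $\mathfrak{c}_2(g_0,g_1,g_2)=([g_0,g_1],[g_1,g_2])$; the pertinent commutators reduce to $[\bar{q},\gamma(t)]=\psi(q)(\gamma(t))$, and under the adjunction $\pi_2(B[G,G])\cong \pi_1([G,G]_0)$ these glue into the loop $\psi(q)\circ\gamma$. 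I expect the main obstacle to be exactly this last step: carefully matching the cocycle description of the lift with the simplicial formula for $\mathfrak{c}$ so as to obtain $\mathfrak{c}_*\beta_{q,\gamma}=\psi(q)_*[\gamma]$ on the nose rather than up to an uncontrolled scalar. This bookkeeping will follow the pattern of the cocycle-inversion arguments in \cite[Sec.~3]{RamrasVillarreal19}.
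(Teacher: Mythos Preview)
Your overall strategy---reduce to showing $\psi(q)_*[\gamma]\in\im\mathfrak{c}_*$ for each $q$ and loop $\gamma$, then invoke Lemma~\ref{lemma-pi1-comm}(2)---is exactly the paper's. The difference, and the gap, is in how you produce the class in $\pi_2(E(2,G))$ and compute $\mathfrak{c}_*$ on it.

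The paper does not go through $B(2,G)$ at all. Instead of a cocycle $\alpha_{ij}\colon C_i\cap C_j\to G$ (which only yields a simplicial map $N(\mathcal{C})\to C_\bullet(G)$), it specifies functions ${_{q,x}\varphi_r}\colon C_r\to G$ on each \emph{piece} of the cover, not just on intersections. Such data is exactly what one needs for a simplicial map $N(\mathcal{C})\to E_\bullet(2,G)$ into the affinely commutative model, provided the tuples $(\varphi_0(x),\dots,\varphi_k(x))$ are affinely commutative on every $k$-fold intersection. With the explicit choices $\varphi_1=h$, $\varphi_2=\bar q$, $\varphi_3=1$ this is immediate, and then $\mathfrak{c}_\bullet$ applies \emph{simplicially}: one literally reads off $[\varphi_i(x),\varphi_j(x)]$ and obtains a $[G,G]$-valued cocycle whose clutching function is $(\psi(q)\circ x)^{-1}$. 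No lifting, no correction term, no appeal to the long exact sequence.

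In your version the lift $\beta_{q,\gamma}$ is obtained only abstractly from the long exact sequence; the data $\alpha,\alpha'$ live in $C_\bullet(G)$, not in $E_\bullet(2,G)$, so ``feeding the $2$-simplices through $\mathfrak{c}_2(g_0,g_1,g_2)$'' is not well defined as written---your simplices are commuting pairs, not affinely commutative triples. To salvage the computation you would either have to (i) promote $\alpha$ to $0$-cochain data $\varphi_r\colon C_r\to G$, which is precisely the paper's move, or (ii) use the relation $j\circ\mathfrak{c}\simeq i\circ\phi^{-1}\circ p$ from \cite[Remark~11]{AntolinGritschacherVillarreal21} and compute the clutching functions of the \emph{inverse} cocycles $\alpha^{-1}$ and $(\alpha')^{-1}$, as is done in the proof of Proposition~\ref{prop-single-comm}. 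Either route closes the gap, but the paper's direct construction into $E_\bullet(2,G)$ is both shorter and avoids the correction cocycle $\alpha'$ entirely.
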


\begin{proof}
Consider the cover $\mathcal{C}=\{C_1,C_2,C_3\}$ of $S^2$ as in Subsection \ref{subsec-cover}. We parametrize the semi-circles $C_{r}\cap C_s$ by $C_{rs}(t)$ where $t\in [0,1]$. For every $q\in Q$ and every loop $x\colon[0,1]\to T$ based at the identity $1\in T$, let us now define the 0-cocycles $_{q,x}\varphi_{r}\colon C_{r}\to G$ by 
\begin{align*}
_{q,x}\varphi_{1}=& \;h\\
_{q,x}\varphi_{2}=& \;\bar{q}\\
_{q,x}\varphi_{3}=& \;1\,,
\end{align*}
where $h$ is an extension to $C_1$ obtained from the null map $x*\overline{x}$ which applies $x$ over $C_{12}(t)$ and $x$ in the opposite direction over $C_{13}(t)$. We claim that these maps induce a simplicial map ${_{q,x}\varphi}_\bullet\colon N(\mathcal{C})\to E_\bullet(2,G)$. To see this, we need to analyse ${_{q,x}\varphi_{i}^{-1}}_{q,x}\varphi_{j}$, with $i<j$. On double intersections we obtain
\begin{align*}
({_{q,x}\varphi_{1}^{-1}}_{q,x}\varphi_{2})(C_{12}(t))=& \;x(t)^{-1}\bar{q}\\
({_{q,x}\varphi_{2}^{-1}}_{q,x}\varphi_{3})(C_{23}(t))=& \;\bar{q}^{\;-1}\,,
\end{align*} 
and on triple intersections we get $\bar{q}$ and $\bar{q}^{\;-1}$. Thus $({_{q,x}\varphi_{1}},
{_{q,x}\varphi_{2}},{_{q,x}\varphi_{3}})$ is affinely commutative on triple intersection points. Therefore the maps ${_{q,x}\varphi_r}$ induce a simplicial map ${_{q,x}\varphi}_\bullet$ as claimed. Consider the composite 
\[N(\mathcal{C})\xrightarrow{{_{q,x}\varphi}_\bullet}E_\bullet(2,G) \xrightarrow{\mathfrak{c}_\bullet}B_\bullet[G,G]\,,\] 
where $\mathfrak{c}_\bullet$ is the simplicial model of the commutator map. It can be seen that the composite is induced by the cocycle over $\mathcal{C}$ that is constant over $C_1\cap C_3$ and $C_2\cap C_3$ with value 1, and $t\mapsto [x(t),\bar{q}]$ over the parametrized intersection $C_{12}^\prime(t)$, where $t\in [0,1]$. One can verify that after taking geometric realization $\mathfrak{c}\circ{_{q,x}\varphi}\colon S^2\to B[G,G]$ classifies the same bundle as the clutching map
\[S^1\xrightarrow{x}T\xrightarrow{\psi(q)}[G,G]_0\xrightarrow{(-)^{-1}}[G,G]_0\,,\]
for example by \cite[Lemma 3.3]{RamrasVillarreal19}. The proposition now follows from Lemma \ref{lemma-pi1-comm} part 2.
\end{proof}

The space $B(2,G)$ is equipped with an involution $\phi^{-1}\colon B(2,G)\to B(2,G)$ given by the realization of the maps $\phi^{-1}_k\colon C_k(G)\to C_k(G)$ that invert each coordinate. This involution is closely related to the commutator map (see \cite[Remark 11]{AntolinGritschacherVillarreal21}), which we will need to prove the following.  

\begin{proposition}\label{prop-single-comm}
Let $G$ be an extension of a finite group $Q$ by a torus $T$, in which $[G,G]_0$ consists of single commutators. Then $\mathfrak{c}_*\colon \pi_2(E(2,G))\to \pi_2(B[G,G])$ is surjective.
\end{proposition}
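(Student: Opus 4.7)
By Proposition~\ref{prop-max-rank}, the image of $\mathfrak{c}_*\colon\pi_2(E(2,G))\to\pi_2(B[G,G])\cong\pi_1([G,G]_0)$ has maximal rank, so its cokernel is a finite abelian group. The plan is to show this cokernel is trivial by constructing, for each residual class in $\pi_1([G,G]_0)$, a commutative cocycle on the cover $\mathcal{C}$ of $S^2$ whose image under $\mathfrak{c}$ classifies a bundle whose clutching function realizes that class. The involution $\phi^{-1}$ enters through its relation with $\mathfrak{c}$ recalled just before the statement, letting one swap cocycles for their inverses as needed to extract the desired signs.

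The idea is to extend the cocycle $_{q,x}\varphi=(h,\bar{q},1)$ of Proposition~\ref{prop-max-rank} by replacing the constant second coordinate $\bar{q}$ with data extracted from a single-commutator decomposition. Given a target $c\in[G,G]_0$ representing a residual class, I would use the hypothesis to write $c=[a,b]$ with $a=\bar{p}s_0$ and $b=\bar{q}t_0$. Since $c\in[G,G]_0$, the commutator formula
\[[\bar{p}s,\bar{q}t]=[\bar{p},\bar{q}]\,\psi([p,q])(s)\,\psi(q^{-1}pq)(t)\,\psi(q)(s^{-1})\]
from the proof of Lemma~\ref{lemma-pi1-comm} forces $[\bar{p},\bar{q}]\in[G,G]_0$ and hence $[p,q]=1$ in $Q$. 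Varying $s$ along a loop in $T$ based at $s_0$ (keeping $t_0$ fixed) traces a loop through $c$ in $[G,G]_0$. I would package this loop as the clutching data of a cocycle $(h,\varphi_2,\varphi_3)$ on $\mathcal{C}$ with $\varphi_2(C_{12}(t))=\bar{p}s(t)$ and $\varphi_3=b$, where $h\colon C_1\to G$ is chosen so that $h|_{\partial C_1}$ is null-homotopic in $G$ and the triples at the two triple-intersection poles are affinely commutative.

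Affine commutativity of the triple $(h(\mathrm{pole}),\bar{p}s_0,b)$ reduces to a centralizer-type constraint on $h(\mathrm{pole})$ that is solvable because $[p,q]=1$ in $Q$. With such an $h$, Lemma~3.2 of \cite{RamrasVillarreal19} identifies the clutching function of the classified bundle as (essentially) the loop $t\mapsto[\bar{p}s(t),b]\in[G,G]_0$, which by the commutator formula represents the target class. Combining with the maximal-rank image of Proposition~\ref{prop-max-rank} then exhausts $\pi_1([G,G]_0)$.

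The hardest part will be satisfying simultaneously (a) affine commutativity at both poles and (b) null-homotopicity of $h|_{\partial C_1}$ as a loop in $G$; without care the prescribed boundary data may fail to extend over the disc $C_1$. The single-commutator hypothesis supplies the decomposition $c=[a,b]$ together with the structural constraint $[\bar{p},\bar{q}]\in[G,G]_0$, and this is precisely what makes the two conditions jointly solvable and closes the construction.
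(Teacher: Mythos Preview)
There is a genuine gap. Compute the loop your construction actually produces: with $[p,q]=1$ in $Q$ (so $\psi([p,q])=\psi(1)$ is the trivial map), the commutator formula you quote gives
\[
[\bar{p}s(t),\bar{q}t_0]=[\bar{p},\bar{q}]\cdot\psi(q^{-1}pq)(t_0)\cdot\psi(q)\bigl(s(t)^{-1}\bigr),
\]
which is a constant times $\psi(q)(s(t)^{-1})$. Hence the homotopy class of your clutching loop lies in $\im\psi(q)_*\subset\sum_{q'}\im\psi(q')_*$, i.e.\ exactly in the maximal-rank subgroup already furnished by Proposition~\ref{prop-max-rank}. Your ``refined'' 0-cocycle yields no new classes; the single-commutator hypothesis is being spent only on the centralizer constraint $[p,q]=1$, which buys nothing. (You also mention the involution $\phi^{-1}$ in the plan but never actually use it.)

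The paper's argument is different in kind. It builds a commutative \emph{1-cocycle} $\alpha$ on the double intersections---not a 0-cocycle into $E_\bullet(2,G)$---parametrized by only a \emph{fraction} $t\in[0,1/n]$ of a circle $S^1_i\subset T\times T$, where $n$ is the degree of the map $\psi(q_i)(-)\psi(p_i)(-)\colon S^1_i\to S^1$ onto the $i$-th factor of $[G,G]_0$. The single-commutator hypothesis is what guarantees commutativity of the cocycle at \emph{both} endpoints of that fractional arc. One then checks $\varphi_\alpha$ is null, so $f_\alpha$ lifts to $E(2,G)$, and invokes the homotopy $j\mathfrak{c}\simeq i\phi^{-1}p$ to identify $j_*\mathfrak{c}_*$ of that lift with $i_*[f_{\alpha^{-1}}]$, classified by $\varphi_{\alpha^{-1}}$. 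A torsion-free splitting argument shows $[\varphi_{\alpha^{-1}}]\in\pi_1([G,G]_0)$ and that $n[\varphi_{\alpha^{-1}}]$ projects to degree $n$ in the $i$-th $S^1$, hence $[\varphi_{\alpha^{-1}}]$ itself projects to degree $1$. This ``divide by $n$ via the inverse cocycle'' is the essential idea your proposal is missing.
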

\begin{proof}
Choose a group decomposition of $[G,G]_0$ into a product of $S^1$'s. Similarly as in the proof of Proposition \ref{prop-max-rank}, we can write any single commutator as a product of two $\psi(q)$'s. Thus, for each $i$-th projection $\mathrm{proj}_i\colon[G,G]_0\to S^1$ there is a product of commutators $\psi(q_i)(-)\psi(p_i)(-)$ and some circle subgroup $S^1_i\subset T\times T$ such that the image $\psi(q_i)(-)\psi(p_i)(-)(S^1_i)= \mathrm{proj}_i([G,G]_0)$. In particular the composition
\begin{align}
S^1_i\hookrightarrow T\times T\xrightarrow{\psi(q_i)(-)\psi(p_i)(-)} [G,G]_0\xrightarrow{\mathrm{proj}_i} S^1\label{deg n map}
\end{align}
yields a degree $n$ map for some $n\in\Z$, as it is a non-trivial continuous endomorphism of $S^1$. By post composing (\ref{deg n map}) with $(-)^{-1}$ if necessary, we may assume $n>0$. Then by an appropriate parametrization of $S^1\subset T\times T$, say $z\mapsto (x(z),y(z))$, the image of the $n$-th root of unity $\xi_n=e^{\frac{\pi i}{n}}$ will satisfy $[px(\xi_n),qy(\xi_n)]=1$. Parametrize $C_r\cap C_s$ by $C_{rs}(t)$, $t\in [0,1/n]$. Then define $\alpha$ as
\begin{align*}
\alpha_{12}(C_{12}(t))=& \;px(t)qy(t)\\
\alpha_{23}(C_{23}(t))=& \;(qy(t))^{-1}\\
\alpha_{13}(C_{13}(t))=& \;px(t)
 \end{align*}
which is a commutative cocycle by construction. We can represent the associated clutching functions as in the following pictures:
\begin{align}
\begin{tikzpicture}
\draw{(-4.7,0)} node{$t=\frac{1}{n}$};
\draw{(-5,-1)} node{$\varphi_{\alpha}$};
\draw(-3,0) circle(1cm);
\draw(-4,0) node {$-$};
\draw(-2,0) node {$-$};
\draw(-1.3,0) node {$t=0$};
\draw[blue,thick,->](-2,0) arc (0 : 40 : 1);
\draw[blue,thick,->](-2,0) arc (0 : -40 : 1);
\draw(-2.9,1.5) node {$px(t)$};
\draw(-2.9,-1.5) node {$px(t)$};
\draw(1.3,0) node {$t=\frac{1}{n}$};
\draw(1,-1) node {$\varphi_{\alpha^{-1}}$};
\draw(3,0) circle (1cm);
\draw(4,0) node {$-$};
\draw(2,0) node {$-$};
\draw(4.7,0) node {$t=0$};
\draw[blue,thick,->](4,0) arc (0 : 40 : 1);
\draw[blue,thick,->](4,0) arc (0 : -40 : 1);
\draw(3.2,1.5) node {$(qy(t))^{-1}(px(t))^{-1}qy(t)$};
\draw(3.2,-1.5) node {$(px(t))^{-1}$};
\end{tikzpicture}\label{pic:cf}
\end{align}
It follows that $\varphi_\alpha$ is null-homotopic, and $\varphi_{\alpha^{-1}}$ is homotopic to (\ref{deg n map}). Now since $f_{\alpha}\colon S^2\to B(2,G)$ is null, it induces a class in $\pi_2(E(2,G))$. 

Consider the diagram
\[
\xymatrix{
E(2,G)\ar[rrr]^{\mathfrak{c}}\ar@{=}[d]&&&B[G,G]\ar[d]^j\\
E(2,G)\ar[r]&B(2,G)\ar[r]^{\phi^{-1}}&B(2,G)\ar[r]^{i}&BG
\,,}
\]
which commutes up to homotopy (\cite[Remark 11]{AntolinGritschacherVillarreal21}). 

Since $T/[G,G]_0$, $[G,G]_0$ and $T$ are tori, the inclusion $[G,G]_0\to T$ induces a splitting $\pi_1(T)\cong \pi_1([G,G]_0)\oplus\pi_1(T/[G,G]_0)$.
We claim that $\pi_2(i\phi^{-1})(f_{\alpha})=\pi_2(if_{\alpha^{-1}})$ lies in $j_*(\pi_2(B[G,G]))$. Let $\varphi_{\alpha^{-1}}\colon S^1\to G$ be the associated clutching map of $\alpha^{-1}$ depicted in (\ref{pic:cf}). By construction $(-)^n\circ\varphi_{\alpha^{-1}}$ is homotopic to (\ref{deg n map}), thus $n[\varphi_{\alpha^{-1}}]\in \pi_1([G,G]_0)$, which implies $[\varphi_{\alpha^{-1}}]\in\pi_1([G,G]_0)$ as $\pi_1([G,G]_0)$ is a direct summand and $\pi_1(T/[G,G]_0)$ is torsion free. Our claim now follows.

We conclude that the composite $\pi_2(E(2,G))\xrightarrow{\mathfrak{c}_*} \pi_2(B[G,G])\xrightarrow{\mathrm{proj}_i} \pi_2(BS_i^1)$ is surjective for every $i$-th projection, so $\mathfrak{c}_*$ is surjective, as well. 
\end{proof}

\begin{example}\label{examp-exten-by-S1}
A generic example of an extension where $[G,G]_0$ consists of single commutators is when $G$ is an extension by $T=S^1$ (e.g. $G=O(2)$). Recall that $\mathrm{Aut}(S^1)=\Z/2$, then since $T$ is normal in $G$, either $T$ is fixed by any lift of $\pi_0(G)$ in $G$, in which case $T\subset Z(G)$ and $[G,G]_0$ is finite, or there exists a lift $a$ that acts by inversion on $T$. In the latter case we can write $z\in T$ as $[\sqrt{z^{-1}},a]$, and $[G,G]_0=T$. 
\end{example}

\begin{example}\label{prop-pi2cm-surj-conn}

Let $G$ be a semisimple connected compact Lie group, and $T$ a maximal torus. It is known that the normalizer $N_G(T)$ satisfies the hypothesis of Proposition \ref{prop-single-comm}. To see this, let $\text{Lie}(T)$ be the Lie algebra of $T$. Choose a basis of $\text{Lie}(T)^*$ consisting of simple roots $\{\alpha_i\}$. To each $\alpha_i$ there is an associated coroot $\alpha^\dagger_i$ in $\text{Lie}(T)$. These elements induce a Lie group homomorphism
 \[\exp(\widehat{\alpha}_i\colon \text{span}(\alpha^\dagger_i)\hookrightarrow \text{Lie}(T))\colon  S^1\to T\] 
 such that $\prod_i \exp(\widehat{\alpha}_i)\colon (S^1)^m\to T $ is an isomorphism, where $m$ is the rank of $T$. The Weyl group of $G$ is then generated by the reflections $s_i$ through the hyperplane $\ker(\alpha_i)$ which is also orthogonal to $\alpha_i^\dagger$ for all $1\leq i\leq m$. Therefore for each $i$ the action of $s_i$ on $\exp(\widehat{\alpha}_i)(S^1)$ is given by inversion, and as in Example \ref{examp-exten-by-S1} we can now write any element in $T$ as a single commutator. In particular $T=[N_G(T),N_G(T)]_0$. This is a way of proving that any element in a compact connected semisimple Lie group consists of single commutators \cite{Goto49}.
\end{example}

We finish this subsection with an application concerning the splitting of the commutator map after looping \cite[Question 21]{AntolinGritschacherVillarreal21}.

\begin{corollary}\label{cor-omegacsplit}
Let $G$ be an extension of a finite group by a torus, where $[G,G]_0$ consists of single commutators. Then $\Omega\mathfrak{c}\colon \Omega E(2,G)\to [G,G]$ splits, up to homotopy.
\end{corollary}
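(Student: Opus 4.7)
The plan is to construct a homotopy section $s\colon[G,G]\to\Omega E(2,G)$ of $\Omega\mathfrak{c}$ in two stages: first on the identity component $[G,G]_0$, then on the remaining components by translating with chosen lifts, using the H-space multiplication on $\Omega E(2,G)$. Since $G_0=T$ is abelian, $[G,G]_0$ is a connected closed subgroup of $T$, hence a subtorus $T'$ of some rank $k'$, and $\pi_1([G,G])=\pi_1(T')\cong\Z^{k'}$.

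For the identity component, Proposition \ref{prop-single-comm} supplies a surjection $\mathfrak{c}_*\colon\pi_2(E(2,G))\twoheadrightarrow\pi_1([G,G]_0)\cong\Z^{k'}$; since the target is free abelian I choose a splitting and represent each basis class by a pointed map $f_i\colon S^1\to\Omega E(2,G)$. Using the H-space multiplication on the loop space, I set
\[s_0\colon T'\cong(S^1)^{k'}\to\Omega E(2,G),\qquad(z_1,\ldots,z_{k'})\mapsto f_1(z_1)\cdot f_2(z_2)\cdots f_{k'}(z_{k'}).\]
Since $\Omega\mathfrak{c}$ is an H-space map (it is the loop of $\mathfrak{c}$) and each $\Omega\mathfrak{c}\circ f_i\colon S^1\to[G,G]_0$ represents the $i$-th generator of $\pi_1([G,G]_0)$ by construction, the composite $\Omega\mathfrak{c}\circ s_0$ is homotopic to the inclusion $T'\hookrightarrow[G,G]$.

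To extend $s_0$ across all of $[G,G]$, I first verify that $\pi_1(\mathfrak{c})\colon\pi_1(E(2,G))\to\pi_0([G,G])$ is surjective: for any single commutator $[g_1,g_2]$, concatenating the 1-simplices $(1,g_1),(g_1,g_2),(g_2,1)\in E_1(2,G)=G\times G$ produces a loop in $E(2,G)$ whose $\mathfrak{c}$-image, after collapsing the two degenerate edges $[1,g_1]=[g_2,1]=1$, represents the component of $[g_1,g_2]$ in $\pi_0([G,G])=\pi_1(B[G,G])$. Since $[G,G]$ is generated by single commutators, concatenations of such loops hit every component. For each component $K_\alpha\subseteq[G,G]$, pick $g_\alpha\in K_\alpha$ and a lift $\tilde g_\alpha\in\Omega E(2,G)$ with $\Omega\mathfrak{c}(\tilde g_\alpha)\in K_\alpha$, and define $s(g_\alpha h):=\tilde g_\alpha\cdot s_0(h)$ for $h\in[G,G]_0$; continuity holds because the $K_\alpha$ are clopen.

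Writing $\Omega\mathfrak{c}(\tilde g_\alpha)=g_\alpha c_\alpha$ with $c_\alpha\in[G,G]_0$, the H-space compatibility together with the identity-component case gives $\Omega\mathfrak{c}(s(g_\alpha h))\simeq g_\alpha c_\alpha h$, so $\Omega\mathfrak{c}\circ s\big|_{K_\alpha}$ is left multiplication by $c_\alpha\in[G,G]_0$. Since $[G,G]_0$ is connected, a path from $c_\alpha$ to $1$ in $[G,G]_0$ supplies a homotopy of this multiplication to the identity, and these glue across components to give $\Omega\mathfrak{c}\circ s\simeq\mathrm{id}_{[G,G]}$. The main input is Proposition \ref{prop-single-comm}; the principal subtlety will be the H-space bookkeeping needed to upgrade the $\pi_2$-splitting into a space-level section covering all components of $[G,G]$ simultaneously.
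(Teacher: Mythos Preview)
Your proof is correct and follows essentially the same two-stage strategy as the paper: split on the identity component using Proposition~\ref{prop-single-comm}, then translate to the other components using lifts of $\pi_0$. The paper's argument is terser---it invokes \cite[Proposition~9]{AntolinGritschacherVillarreal21} for the section on $\pi_0(\Omega\mathfrak{c})$ rather than building the explicit edge-path $(1,g_1),(g_1,g_2),(g_2,1)$ as you do, and it leaves the H-space bookkeeping and the final homotopy $\Omega\mathfrak{c}\circ s\simeq\mathrm{id}$ implicit---so your version is somewhat more self-contained and detailed, but there is no genuine methodological difference.
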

\begin{proof}
The commutator subgroup is a disjoint union of copies of $[G,G]_0$ indexed by representatives $a$ of each element in $\pi_0[G,G]$, that is, 
\[[G,G]=\bigsqcup_{[a]\in \pi_0[G,G]}a[G,G]_0\,.\]
 
To define a splitting $s_1$ at the component of identity we only need to specify maps $s^i_1\colon S^1\to \Omega_0 E(2,G)$ that composed with 
\[\Omega_0 E(2,G)\xrightarrow{\Omega\mathfrak{c}} [G,G]_0\xrightarrow{\mathrm{proj}_i}S^1 \]
are the identity, up to homotopy. But by Proposition \ref{prop-single-comm}, $\pi_1(\Omega\mathfrak{c})$ is surjective, hence we have a section $\sigma_1$ (which in fact can be chosen as a group homomorphism since $\pi_1([G,G]_0)$ is free abelian). Thus we can define $s^i_1:=\sigma_1[\iota_i]$ for each $i$-th generator $S^1\xrightarrow{\iota_i} [G,G]_0$ in $\pi_1([G,G]_0)$.

To extend $s_0$ to $\Omega E(2,G)$ we use \cite[Proposition 9]{AntolinGritschacherVillarreal21} which asserts that $\pi_0(\Omega\mathfrak{c})$ has a section $\sigma$. Then we extend on each $a[G,G]$ by picking an element $\sigma_0(a)\in\sigma[a]$ for every $[a]\in \pi_0[G,G]$. Then we can set $s=\sigma_0s_0$.
\end{proof}

\begin{remark}
When $G$ is an extension of a finite group by a torus, where the elements in $[G,G]_0$ are not necessarily single commutators, $[G,G]$ still splits off the loop space $\Omega E(2,G)$, up to homotopy. It is mostly a consequence of Proposition \ref{prop-max-rank} and the fact that the commutator map $\mathfrak{c}$ is surjective in $\pi_1$, albeit the splitting may not be attained by the looped commutator map as in Corollary \ref{cor-omegacsplit}. 
\end{remark}

\section{Proof of Theorem \ref{MainTheorem1} and applications}\label{SectionProofMainTh}

We have all the ingredients to prove our main result, but we still have to pin down generators of $\pi_2(B[G,G])$. For that we require the following lemma.

\begin{lemma}\label{lem-comm-dec-as-cenprod}
Let $G$ be a compact Lie group. Then there exists a finite subgroup $F\subset G$, such that $[G,G]_0$ is isomorphic to the central product
\[[G,G]_0\cong[F,Z(G_0)_0]\times _Z [G_0,G_0]\,,\] 
where $Z=[F,Z(G_0)_0]\cap[G_0,G_0]$. In particular there is a split short exact sequence \[0\to \pi_1([G_0,G_0])\to \pi_1([G,G]_0)\to\pi_1([F,Z(G_0)_0]/Z)\to 0\]
induced by the inclusion $[G_0,G_0]\to [G,G]_0$ and the projection $[G,G]_0\to [F,Z(G_0)_0]/Z$. 
\end{lemma}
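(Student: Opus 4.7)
By the Borel--Serre lemma (used already in the proof of Lemma \ref{lemma-pi1-comm}), there is a finite subgroup $F\subset G$ with $G=FG_0$. Set $A:=[F,Z(G_0)_0]\cdot[G_0,G_0]$. Since $[F,Z(G_0)_0]\subset Z(G_0)_0\subset Z(G_0)$ commutes with $[G_0,G_0]$, the multiplication map $[F,Z(G_0)_0]\times[G_0,G_0]\twoheadrightarrow A$ is a continuous surjective homomorphism whose kernel is the antidiagonal copy of $Z=[F,Z(G_0)_0]\cap[G_0,G_0]$. This already gives $A\cong[F,Z(G_0)_0]\times_Z[G_0,G_0]$, so the substance of the lemma reduces to identifying $A=[G,G]_0$ and extracting the homotopy sequence.

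For $A=[G,G]_0$, I would first note that $A$ is connected (both factors are) and normal in $G$: $[G_0,G_0]$ is characteristic in the normal subgroup $G_0$, and $[F,Z(G_0)_0]$ is fixed by $G_0$-conjugation (being central in $G_0$) and is $F$-stable because conjugating $[f,z]$ by $f'\in F$ and writing $f'ff'^{-1}=f''g$ with $g\in G_0$ collapses, by the centrality of $f'zf'^{-1}\in Z(G_0)_0$, to a single commutator $[f'',f'zf'^{-1}]\in[F,Z(G_0)_0]$. Using the decomposition $G_0=Z(G_0)_0[G_0,G_0]$ together with the identities $[xy,z]=[x,z]^y[y,z]$ and $[x,yz]=[x,z][x,y]^z$, I would expand a general commutator $[f_1g_1,f_2g_2]$ (writing $g_i=z_ih_i$) into a product of factors from $[F,Z(G_0)_0]$, $[G_0,G_0]$, and $[F,F]$; centrality of the $[F,Z(G_0)_0]$-factors in $G_0$ kills all conjugation-by-$G_0$ terms that appear. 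Normality of $A$ then gives $[G,G]=A\cdot[F,F]$, so $[G,G]/A$ is a quotient of the finite group $[F,F]$, and combined with connectedness of $A$ this forces $A=[G,G]_0$.

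For the $\pi_1$ sequence, observe that $[G_0,G_0]$ is semisimple, so $Z\subset Z([G_0,G_0])$ is finite; hence $[F,Z(G_0)_0]/Z$ is again a torus. The central product structure gives a fibration of compact Lie groups
\[
1\longrightarrow [G_0,G_0]\longrightarrow [G,G]_0\longrightarrow [F,Z(G_0)_0]/Z\longrightarrow 1,
\]
whose long exact homotopy sequence collapses, using $\pi_2(\mathrm{torus})=0$ and connectedness of $[G_0,G_0]$, to the displayed short exact sequence on $\pi_1$. Splitting is automatic because $\pi_1([F,Z(G_0)_0]/Z)$ is free abelian. The main obstacle I expect is the bookkeeping in stage two: the expansion of $[f_1z_1h_1,f_2z_2h_2]$ is not conceptually hard but is tedious, and one must carefully exploit that $[F,Z(G_0)_0]$ is central in $G_0$ so that every conjugate that appears lands back in $A$.
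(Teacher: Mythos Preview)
Your proposal is correct and follows essentially the same route as the paper: Borel--Serre gives $G=FG_0$, the decomposition $G_0=Z(G_0)_0[G_0,G_0]$ yields $[G,G]=[F,F][F,Z(G_0)_0][G_0,G_0]$, connectedness of the last two factors versus finiteness of $[F,F]$ identifies $[G,G]_0$, and the $\pi_1$-sequence comes from the evident fibration with free abelian base $\pi_1$. The only difference is emphasis: you explicitly verify normality of $A=[F,Z(G_0)_0][G_0,G_0]$ in $G$ before forming $[G,G]/A$, whereas the paper asserts the commutator decomposition directly from normality of $[G_0,G_0]$ and of $Z(G_0)_0$ in $G$ and then reads off the identity component.
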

\begin{proof}
As in the proof of Lemma \ref{lemma-pi1-comm} part 1, we can find a finite group $F\subset G$ such that $G=FG_0$, and decomposition \cite[Theorem 4.29]{Knapp} then yields $G=FZ(G_0)_0[G_0,G_0]$. Since $[G_0,G_0]$ is normal in $G$ we obtain that 
\[[G,G]=[F,F][F,Z(G_0)_0][G_0,G_0]\,.\]
As $Z(G_0)_0$ is a characteristic subgroup of $G_0$ it is normal in $G$, that is, $[F,Z(G_0)_0]\subset Z(G_0)_0$ and since it is closed and connected, it is a torus. Moreover, since $[G_0,G_0]\subset[G,G]_0$ is a connected subgroup, as well, $[G,G]_0=[F,Z(G_0)_0][G_0,G_0]$. Then as in the proof of Lemma \ref{lem-e2semisimple} we have an isomorphism $[G,G]_0\cong[F,Z(G_0)_0]\times _Z[G_0,G_0]$, where $Z\subset Z[G_0,G_0]$. In particular we have a short exact sequence $0\to\pi_1([G_0,G_0])\to \pi_1([G,G]_0)\to \pi_1([F,Z(G_0)_0]/Z)\to 0$ obtained from the fibration sequence $[G_0,G_0]\to [F,Z(G_0)_0]\times _Z[G_0,G_0]\to [F,Z(G_0)_0]/Z$. As $\pi_1([F,Z(G_0)_0]/Z)$ is a free abelian group and $\pi_1([G,G]_0)$ is abelian, the sequence splits.
\end{proof}

Thus we can study separately the elements in $\pi_1([G_0,G_0])$ which are all torsion (as $[G_0,G_0]$ is semisimple), and the elements in $\pi_1([F,Z(G_0)_0]/Z)$ which is a finitely generated free abelian group. In both cases these elements arise from extensions of finite groups by tori.

\begin{proof}[Proof of Theorem \ref{MainTheorem1}]

We divide the proof of the theorem into 3 claims. 

\begin{claim}\label{claimisoconn} 
$\pi_2(\mathfrak{c})$ is an isomorphism when $G$ is connected. 
\end{claim}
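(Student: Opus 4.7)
The plan is to reduce to the case of a semisimple connected compact Lie group and then use the normalizer of a maximal torus as a source of cocycle representatives whose image under $\mathfrak{c}_*$ spans all of $\pi_2(B[G,G])$.

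First, I would exploit the naturality of the commutator map with respect to the inclusion $[G,G] \hookrightarrow G$. For $G$ a connected compact Lie group, $[G,G]$ is semisimple connected, so $[[G,G],[G,G]] = [G,G]$, giving a commutative square
\[
\xymatrix{
E(2,[G,G]) \ar[r]^-{\mathfrak{c}} \ar[d]_{\simeq} & B[G,G] \ar@{=}[d] \\
E(2,G) \ar[r]^-{\mathfrak{c}} & B[G,G]
}
\]
where the left vertical map is the homotopy equivalence of Proposition \ref{lem-e2semisimple}. So it is enough to prove the claim when $G$ is semisimple connected; in this case $[G,G]=G$ and both $\pi_2(E(2,G))$ and $\pi_2(BG)$ are canonically isomorphic to the finite group $\pi_1(G)$ (by Proposition \ref{prop-pi2E2g}). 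Since any surjection between finite abelian groups of the same order is an isomorphism, it suffices to show that $\mathfrak{c}_*\co \pi_2(E(2,G)) \to \pi_2(BG)$ is surjective.

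To produce enough classes in the image, I would use the normalizer $N=N_G(T)$ of a maximal torus $T$ of $G$. By Example \ref{prop-pi2cm-surj-conn}, $N$ is an extension of the Weyl group by $T$ in which $[N,N]_0 = T$ and every element of $T$ is a single commutator, so Proposition \ref{prop-single-comm} yields the surjectivity of $\mathfrak{c}_*\co \pi_2(E(2,N)) \to \pi_2(B[N,N]) = \pi_2(BT) = \pi_1(T)$. The inclusion $N \hookrightarrow G$ then gives a commutative square
\[
\xymatrix{
\pi_2(E(2,N)) \ar[r]^-{\mathfrak{c}_*} \ar[d] & \pi_1(T) \ar[d] \\
\pi_2(E(2,G)) \ar[r]^-{\mathfrak{c}_*} & \pi_1(G)
}
\]
whose top arrow is surjective, and whose right vertical arrow is surjective because $T \hookrightarrow G$ induces a surjection in $\pi_1$ for any compact connected Lie group. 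Chasing the diagram, the bottom arrow is surjective, and by the observation of the previous paragraph it is then an isomorphism.

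The main obstacle I anticipate is verifying the naturality of $\mathfrak{c}$ with respect to the two relevant inclusions ($[G,G] \hookrightarrow G$ and $N \hookrightarrow G$) at the simplicial level, and making sure that the homotopy equivalence of Proposition \ref{lem-e2semisimple} really fits into a strictly (or at least homotopy) commutative square with the commutator map. Once this naturality is in place, the argument reduces to two basic ingredients already established in the paper: the single-commutator surjectivity of $\mathfrak{c}_*$ for $N_G(T)$, and the surjectivity of $\pi_1(T) \to \pi_1(G)$. Everything else is the counting argument comparing two copies of the finite group $\pi_1(G)$.
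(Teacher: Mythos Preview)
Your proposal is correct and follows essentially the same route as the paper: reduce to the semisimple case via the equivalence $E(2,[G,G])\simeq E(2,G)$, then use $N_G(T)$ together with Proposition \ref{prop-single-comm} and the surjectivity of $\pi_1(T)\to\pi_1(G)$ to show $\mathfrak{c}_*$ is onto, and conclude by finiteness of $\pi_1(G)$. The only cosmetic difference is that the paper justifies the surjectivity of $\pi_1(T)\to\pi_1(G)$ via the simple connectivity of the flag variety $G/T$, whereas you invoke it as a known fact; and your worry about naturality of $\mathfrak{c}$ is unfounded, since the simplicial formula $(g_0,\dots,g_k)\mapsto([g_0,g_1],\dots,[g_{k-1},g_k])$ is visibly natural in $G$.
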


\begin{proof}

The existence of an isomorphism $\pi_2(E(2,G))\cong\pi_1([G,G])$ has already been showed in Proposition \ref{prop-pi2E2g}, thus to conclude $\pi_2(\mathfrak{c})$ is an isomorphism, we only need to show it is surjective, as in this situation $[G,G]$ is semisimple, so $\pi_1([G,G])$ is finite. We have the following commutative diagram 
\[
\xymatrix{
E(2,[G,G])\ar[d]_{\simeq}\ar[r]^{\cm}&B[G,G]\ar@{=}[d]\\
E(2,G)\ar[r]^{\cm}&B[G,G]\,,
}
\]
where the left vertical map is a homotopy equivalence by Lemma \ref{lem-e2centralproduct}.  So we can further assume that $G$ is semisimple.

Recall that if $T$ is a maximal torus of $G$, the flag manifold $G/T$ is simply-connected (as it has a CW structure with only even dimensional cells). Then the long exact sequence of homotopy groups for the fiber sequence $G/T\to BT\to BG$ gives a surjection $\pi_2(BT)\to\pi_2(BG)$. 

Let $N=N_G(T)$. We have the following commutative diagram:
\[
\xymatrix{
E(2,N)\ar[d]\ar[r]^{\cm}&B[N,N]\ar[d]\\
E(2,G)\ar[r]^{\cm}&BG\,.
}
\]

As explained in Example \ref{prop-pi2cm-surj-conn}, when $G$ is semisimple, $T=[N,N]_0$ and $T$ consists of single commutators. We can apply Proposition \ref{prop-single-comm} to see that the top horizontal map is surjective on $\pi_2$. Since the inclusion $BT\to BG$ factors through $B[N,N]\to BG$, the right vertical arrow is surjective in $\pi_2$, implying the bottom horizontal arrow is surjective in $\pi_2$, as well. Then by Proposition \ref{prop-pi2E2g}, $\pi_2(\mathfrak{c})$ must be an isomorphism, as $\pi_1(G)$ is a finite group when $G$ is semisimple. 
\end{proof}

\begin{claim}\label{claimpi2csurjtorsion}
$\pi_2(\mathfrak{c})$ is a split surjection onto the torsion part of $\pi_2(B[G,G])$.
\end{claim}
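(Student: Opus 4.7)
The plan is to exploit naturality of the commutator map along the inclusion $G_0 \hookrightarrow G$ and reduce to the connected case already settled in Claim \ref{claimisoconn}.

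First, I would identify the torsion part of $\pi_2(B[G,G]) = \pi_1([G,G]_0)$ using Lemma \ref{lem-comm-dec-as-cenprod}. That lemma provides a split short exact sequence
\[
0\to \pi_1([G_0,G_0])\to \pi_1([G,G]_0)\to \pi_1([F,Z(G_0)_0]/Z)\to 0,
\]
in which the first factor is finite (because $[G_0,G_0]$ is a semisimple compact Lie group) and the third is free abelian. Consequently the torsion subgroup of $\pi_2(B[G,G])$ is exactly the image of the inclusion-induced map $\pi_1([G_0,G_0])\hookrightarrow \pi_1([G,G]_0)$, and the lemma also hands us a canonical retraction $r\colon \pi_1([G,G]_0) \to \pi_1([G_0,G_0])$ onto it.

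Next, naturality of $\mathfrak{c}$ applied to $G_0\hookrightarrow G$ yields the commutative square
\[
\xymatrix{
E(2,G_0) \ar[d] \ar[r]^-{\cm} & B[G_0,G_0] \ar[d] \\
E(2,G) \ar[r]^-{\cm} & B[G,G],
}
\]
whose top horizontal arrow induces an isomorphism $\pi_2(E(2,G_0)) \xrightarrow{\cong} \pi_1([G_0,G_0])$ by Claim \ref{claimisoconn} applied to the connected group $G_0$. I would then define the candidate section
\[
s\colon \pi_1([G_0,G_0]) \xrightarrow{\;\cong\;} \pi_2(E(2,G_0)) \longrightarrow \pi_2(E(2,G))
\]
by inverting the top row and applying the map induced by $G_0\hookrightarrow G$. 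Commutativity of the square forces $\mathfrak{c}_*\circ s$ to agree with the inclusion of the torsion summand into $\pi_1([G,G]_0)$, whence $r\circ \mathfrak{c}_* \circ s = \mathrm{id}$, proving that $r\circ \mathfrak{c}_*$ is a split surjection onto the torsion part with section $s$.

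Since naturality of $\mathfrak{c}$, the connected case, and the splitting of $\pi_1([G,G]_0)$ are all already in hand, I do not anticipate any substantive obstacle; the argument is essentially a diagram chase built on Claim \ref{claimisoconn} and Lemma \ref{lem-comm-dec-as-cenprod}. The only point worth a line of verification is that the composite $s$ is a genuine group homomorphism, which it is by construction.
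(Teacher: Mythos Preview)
Your proposal is correct and follows essentially the same approach as the paper: both use naturality of $\mathfrak{c}$ along $G_0\hookrightarrow G$, invoke Claim \ref{claimisoconn} for the top row of the resulting square, and appeal to Lemma \ref{lem-comm-dec-as-cenprod} to identify the torsion part of $\pi_2(B[G,G])$ with the image of $\pi_2(B[G_0,G_0])$. The paper's argument is terser, simply noting that the right vertical arrow is an isomorphism onto the torsion part and the top arrow is an isomorphism, whereas you spell out the section $s$ and retraction $r$ explicitly; the content is the same.
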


\begin{proof}
Consider the inclusion $G_0\to G$ and the commutative diagram
  \[
\xymatrix{\pi_2(E(2,G_0))\ar[r]^{\pi_2(\mathfrak{c})}\ar[d]&\pi_2(B[G_0,G_0])\ar[d]\\
\pi_2(E(2,G))\ar[r]^{\pi_2(\mathfrak{c})}&\pi_2(B[G,G])\,.}
\]
By Lemma \ref{lem-comm-dec-as-cenprod}, the right vertical arrow is an isomorphism onto the torsion part of $\pi_2(B[G,G])$. By Claim 1, the top horizontal arrow is an isomorphism. Hence the bottom $\pi_2(\mathfrak{c})$ is split surjective onto the torsion part, as claimed.
\end{proof}

\begin{claim}\label{claimmaxrank}
The image of $\pi_2(\mathfrak{c})$ is of maximal rank.
\end{claim}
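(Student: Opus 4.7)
The plan is to reduce Claim \ref{claimmaxrank} to Proposition \ref{prop-max-rank} by choosing a well-placed subgroup of $G$. By Lemma \ref{lem-comm-dec-as-cenprod} we have a splitting
\[\pi_2(B[G,G]) = \pi_1([G,G]_0) \cong \pi_1([G_0,G_0]) \oplus \pi_1([F,Z(G_0)_0]/Z).\]
Since $[G_0,G_0]$ is semisimple, the first summand is finite, so the free rank of $\pi_2(B[G,G])$ coincides with the rank of $\pi_1([F,Z(G_0)_0]/Z)$. The task is therefore to exhibit the image of $\pi_2(\mathfrak{c})$ as surjecting, up to finite index, onto this free summand.

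First, I would introduce the closed subgroup $H := F \cdot Z(G_0)_0 \subset G$. This is an extension of the finite group $H/Z(G_0)_0$ by the torus $Z(G_0)_0$, and since $Z(G_0)_0$ is abelian one checks immediately that $[H,H]_0 = [F,Z(G_0)_0]$. Naturality of the commutator map along the inclusion $H \hookrightarrow G$ produces a commutative square
\[
\xymatrix{
\pi_2(E(2,H)) \ar[r]^-{\mathfrak{c}_*} \ar[d] & \pi_1([F,Z(G_0)_0]) \ar[d] \\
\pi_2(E(2,G)) \ar[r]^-{\mathfrak{c}_*} & \pi_1([G,G]_0),
}
\]
and Proposition \ref{prop-max-rank} applied to $H$ ensures the top arrow has image of maximal rank in $\pi_1([F,Z(G_0)_0])$.

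Second, I would observe that $Z = [F,Z(G_0)_0] \cap [G_0,G_0]$ is finite: it is contained in $Z(G_0)_0 \cap [G_0,G_0]$, which sits inside the finite center of the connected semisimple compact Lie group $[G_0,G_0]$. Consequently, the composition
\[\pi_1([F,Z(G_0)_0]) \longrightarrow \pi_1([G,G]_0) \twoheadrightarrow \pi_1([F,Z(G_0)_0]/Z)\]
is induced by the finite covering $[F,Z(G_0)_0] \to [F,Z(G_0)_0]/Z$, so it has finite-index image, hence maximal rank in $\pi_1([F,Z(G_0)_0]/Z)$.

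Chasing the square, the image of the bottom $\mathfrak{c}_*$ projects onto a finite-index subgroup of the free summand $\pi_1([F,Z(G_0)_0]/Z)$, which is what the claim asserts. The only step that really needs verification is that the splitting projection $[G,G]_0 \twoheadrightarrow [F,Z(G_0)_0]/Z$ of Lemma \ref{lem-comm-dec-as-cenprod} does restrict on $[F,Z(G_0)_0]$ to the tautological quotient by $Z$; but this is immediate from the central product presentation $[F,Z(G_0)_0]\times_Z [G_0,G_0]\cong[G,G]_0$ used there. I expect this to be the only routine obstacle; everything else is naturality and the two cited propositions.
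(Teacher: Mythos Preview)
Your argument is correct, and it reaches the same conclusion as the paper via a genuinely different route. The paper constructs a \emph{quotient} homomorphism $\bar\varphi\colon G\to FZ(G_0)_0/Z$ (through a somewhat delicate semidirect-product identification $G\cong (F\ltimes G_0)/(F\cap G_0)$), applies Proposition~\ref{prop-max-rank} to the target group, and then pulls back: since $\bar\varphi|_*$ is an isomorphism on the torsion-free part of $\pi_2(B[G,G])$, maximal rank downstairs forces maximal rank upstairs. You go in the opposite direction, using the \emph{subgroup inclusion} $H=F\cdot Z(G_0)_0\hookrightarrow G$, applying Proposition~\ref{prop-max-rank} to $H$, and then pushing forward and projecting onto the free summand. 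Your route is arguably more elementary: it sidesteps the construction of $\bar\varphi$ entirely, needing only the observations that $[H,H]_0=[F,Z(G_0)_0]$ (which follows from Lemma~\ref{lemma-pi1-comm} part~1 with lifts chosen in $F$) and that $Z$ is finite. The paper's route has the minor advantage that its right vertical arrow is an isomorphism on the free part, so the final diagram chase is a one-liner, whereas you must invoke a rank comparison through the finite covering $[F,Z(G_0)_0]\to[F,Z(G_0)_0]/Z$. Both arguments rest on the same key input, Proposition~\ref{prop-max-rank}, and on Lemma~\ref{lem-comm-dec-as-cenprod} to isolate the free part.
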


\begin{proof}

Let $F\subset G$ be a finite subgroup as in Lemma \ref{lem-comm-dec-as-cenprod}. 
It is not hard to see that $[F,Z(G_0)_0]/Z=[F,Z(G_0)_0/Z]$. This group can be analysed via the extension 
\[1\to Z(G_0)_0/Z\to FZ(G_0)_0/Z\to F/F\cap Z(G_0)_0\to 1\,.\]

We can identify $FZ(G_0)_0/Z$ with the quotient $(F\ltimes Z(G_0)_0/Z)/\varphi(F\cap G_0)$, where $\varphi\colon F\ltimes G_0\to F\ltimes Z(G_0)_0/Z$ is induced by the projection $G_0\to Z(G_0)_0/Z$. Since $G=F\ltimes G_0/F\cap G_0$, $\varphi$ descends to the quotient and we have a well defined homomorphism $\bar\varphi\colon G\to FZ(G_0)_0/Z$ that restricts to the projection $G_0\to Z(G_0)_0/Z$. Consider the commutative diagram
  \[
\xymatrix{\pi_2(E(2,G))\ar[d]_{E(2,\bar\varphi)_*}\ar[r]^{\pi_2(\mathfrak{c})}&\pi_2(B[G,G])\ar[d]^{\bar\varphi|_*} \\
\pi_2(E(2,FZ(G_0)_0/Z))\ar[r]^{\pi_2(\mathfrak{c})}&\pi_2(B[F,Z(G_0)_0/Z])\,.}
 \]
By construction $\bar\varphi|$ restricts to the projection $[G,G]_0\to [F,Z(G_0)_0/Z]$, and by Lemma \ref{lem-comm-dec-as-cenprod}, $\bar\varphi|_*$ is an isomorphism restricted to the torsion free part of $\pi_2(B[G,G])$. By Proposition \ref{prop-max-rank} the bottom horizontal arrow is of maximal rank, implying the top horizontal arrow is of maximal rank, as well. 
\end{proof}

Now we complete the proof of the theorem. Claim \ref{claimisoconn} is precisely part 2 of the theorem. By claims \ref{claimpi2csurjtorsion} and \ref{claimmaxrank}, the image of $\pi_2(\mathfrak{c})$ is a subgroup of the form $A\oplus\pi_2(B[G_0,G_0])\subset \pi_2(B[G,G])$, where $A$ is a maximal rank free abelian subgroup, and $\pi_2(B[G_0,G_0])$ is the torsion part of $\pi_2(B[G,G])$. In particular $A\oplus\pi_2(B[G_0,G_0])$ is isomorphic to $\pi_2(B[G,G])$. Combining Claim 2 and the fact that $A$ is free abelian, we can guarantee the existence of a split surjective homomorphism $\pi_2(E(2,G))\to A\oplus \pi_2(B[G_0,G_0])$, and part 1 of the theorem now follows.
%
\end{proof}

We record some applications of Theorem \ref{MainTheorem1}. First we prove Corollary \ref{cor-e2g2conn}.

\begin{proof}[Proof of Corollary \ref{cor-e2g2conn}]
Suppose $E(2,G)$ is 2-connected. In \cite[Proposition 9]{AntolinGritschacherVillarreal21} it is shown that $\pi_2(\mathfrak{c})$ is surjective, thus if $\pi_1(E(2,G))=1$, $[G,G]$ is connected. Part 1 of Theorem \ref{MainTheorem1} implies that $\pi_2(B[G,G])=\pi_1([G,G])=0$. 
\end{proof}

An equivalent formulation of a transitionally commutative (tc) structure on a principal $G$-bundle, as defined in \cite{GritschacherTh}, is that its classifying map is a factorization, up to homotopy, through the inclusion $i\colon B(2,G)\to BG$. Consider the involution $\phi^{-1}\colon B(2,G)\to B(2,G)$. Then any principal $G$-bundle with a tc structure has an associated involution. Let us denote $p\colon E(2,G)\to B(2,G)$ the pullback of $EG\to BG$ along $i$.  

\begin{lemma}\label{lem-tc-via-comap}
Let $G$ be a well based topological group and suppose $E$ is a principal $G$-bundle over a space $X$ with a reduction to the commutator subgroup of $G$. If its classifying map $f\colon X\to B[G,G]$ factors through the commutator map $\mathfrak{c}$, up to homotopy, then $E$ has a transitionally commutative structure whose involution is a trivial bundle.  
\end{lemma}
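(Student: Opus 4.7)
The plan is to unwind the hypothesis into an explicit lift of the classifying map through $i\colon B(2,G)\to BG$ and then identify the associated involution bundle. Let $\tilde{f}\colon X\to E(2,G)$ be a map with $\mathfrak{c}\tilde{f}\simeq f$, and let $j\colon B[G,G]\to BG$ denote the canonical inclusion, so that $jf$ classifies $E$.

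The first step is to invoke the homotopy $j\mathfrak{c}\simeq i\phi^{-1}p$ recorded in \cite[Remark 11]{AntolinGritschacherVillarreal21} and already reproduced in the proof of Proposition \ref{prop-single-comm}. Precomposing with $\tilde{f}$ yields
\[jf\simeq j\mathfrak{c}\tilde{f}\simeq i\phi^{-1}p\tilde{f}.\]
Setting $g:=\phi^{-1}p\tilde{f}\colon X\to B(2,G)$, this is precisely a factorization, up to homotopy, of the classifying map of $E$ through $i$, which by definition provides a transitionally commutative structure on $E$.

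The second step is to identify the involution of $g$. By definition, the involution bundle is the $G$-bundle classified by $i\phi^{-1}g\colon X\to BG$. Since $\phi^{-1}$ is a strict involution on $B(2,G)$, one has $\phi^{-1}g=\phi^{-1}\phi^{-1}p\tilde{f}=p\tilde{f}$, so the involution bundle is classified by $ip\tilde{f}$. Now, $p$ was defined as the pullback of the universal bundle $EG\to BG$ along $i$, so there is a homotopy commutative square
\[
\xymatrix{E(2,G)\ar[r]\ar[d]_p & EG\ar[d]\\ B(2,G)\ar[r]^i & BG.}
\]
Because $EG$ is contractible, the top horizontal composite $E(2,G)\to EG\to BG$, and therefore $ip$, is nullhomotopic; hence $ip\tilde{f}\simeq \ast$, which shows the involution bundle is trivial.

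The main obstacle, if one can call it that, is bookkeeping of homotopies: assembling the homotopy provided by the hypothesis, the homotopy from Remark 11, and the nullhomotopy of $ip$ into a single coherent lift. The well-based hypothesis on $G$ ensures that the simplicial models of $B(2,G)$ and $E(2,G)$ have the correct homotopy type and that classifying maps may be manipulated up to homotopy in the standard way, so no further technical input is required.
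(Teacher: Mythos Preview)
Your argument is correct and follows the same route as the paper: take a lift $\tilde f$ of $f$ through $\mathfrak{c}$, use the homotopy $j\mathfrak{c}\simeq i\phi^{-1}p$ from \cite[Remark 11]{AntolinGritschacherVillarreal21} to obtain the tc structure $\phi^{-1}p\tilde f$, and observe that its involution $p\tilde f$ becomes null after composing with $i$ since $ip$ factors through $EG$. The paper records exactly this, only more tersely and with the diagram drawn explicitly; your write-up merely spells out why $ip$ is nullhomotopic and why $\phi^{-1}\phi^{-1}=\mathrm{id}$.
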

\begin{proof}
Consider the diagram
\[
\xymatrix{&E(2,G)\ar[d]^{\mathfrak{c}}\ar[r]^{\phi^{-1}p}&B(2,G)\ar[d]^{i}\\
X\ar[r]_{f}\ar[ru]^{f^\prime}&B[G,G]\ar[r]_{j}&BG\,.}
\]
By \cite[Remark 11]{AntolinGritschacherVillarreal21} the square on the right commutes up to homotopy. The composite $\phi^{-1}pf^\prime$ is a tc structure with associated involution $pf^\prime$ which after composing with $i$ is null-homotopic.  
\end{proof}

Part 2 of Theorem \ref{MainTheorem1} in particular implies that when $G$ is connected, the homotopy fiber of $\mathfrak{c}$ is 2-connected. A standard obstruction theory argument and Lemma \ref{lem-tc-via-comap} show the following: 

\begin{corollary}\label{cor-tcstdim3}
Let $X$ be a CW complex of dimension $\leq 3$, and let $G$ be a connected compact Lie group. Then every principal $G$-bundle over $X$ with a reduction to the commutator subgroup has a transitional commutative structure whose involution is a trivial bundle.
\end{corollary}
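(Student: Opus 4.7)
The plan is to produce a homotopy lift of the classifying map $f\colon X\to B[G,G]$ through the commutator map $\mathfrak{c}\colon E(2,G)\to B[G,G]$, and then feed the lift into Lemma \ref{lem-tc-via-comap}. First I would identify the homotopy fiber $F$ of $\mathfrak{c}$. Since $G$ is connected, so is $[G,G]$, so $B[G,G]$ is simply connected; and $E(2,G)$ is simply connected as recalled in Section \ref{sec-pre}. Combined with Part 2 of Theorem \ref{MainTheorem1}, which asserts that $\mathfrak{c}_*$ is an isomorphism on $\pi_2$, the long exact sequence of homotopy groups for $F\to E(2,G)\xrightarrow{\mathfrak{c}} B[G,G]$ forces $\pi_i(F)=0$ for $i\le 2$; that is, $F$ is $2$-connected.

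Next I would run standard obstruction theory for the lifting problem $\mathfrak{c}\circ f'\simeq f$. Pulling $\mathfrak{c}$ back along $f$ yields a fibration over $X$ with fiber $F$, and a homotopy lift of $f$ is the same as a section of this pullback. The obstructions to building such a section cell by cell live in the groups $H^{k+1}(X;\pi_k(F))$ (with local coefficients in general, which is irrelevant here). Since $F$ is $2$-connected, these groups vanish for $k\le 2$, so the only potentially nonzero obstruction is $o_3\in H^4(X;\pi_3(F))$; the hypothesis $\dim X\le 3$ gives $H^4(X;-)=0$, hence $o_3=0$ and a lift $f'\colon X\to E(2,G)$ with $\mathfrak{c}\circ f'\simeq f$ exists.

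Finally, I would invoke Lemma \ref{lem-tc-via-comap} applied to $E\to X$, whose reduction to $[G,G]$ supplies the classifying map $f$: the factorization $f\simeq \mathfrak{c}\circ f'$ produces a transitionally commutative structure on $E$ whose associated involution is a trivial bundle. The genuine content of the argument was already carried out in Theorem \ref{MainTheorem1}; once the $2$-connectivity of $F$ is in hand, the remainder is routine obstruction-theoretic bookkeeping, so I do not anticipate any serious difficulty beyond verifying that the dimension count lines up with the connectivity of $F$.
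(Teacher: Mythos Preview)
Your argument is correct and matches the paper's approach exactly: deduce the $2$-connectivity of the homotopy fiber of $\mathfrak{c}$ from Theorem~\ref{MainTheorem1}(2), then apply obstruction theory and Lemma~\ref{lem-tc-via-comap}. The only implicit ingredient worth making explicit is $\pi_3(B[G,G])\cong\pi_2([G,G])=0$ (true for any Lie group), which is what lets you conclude $\pi_2(F)=0$ from the long exact sequence; also, the simple connectivity of $E(2,G)$ for connected $G$ is recorded in Section~\ref{Section Gcon} rather than Section~\ref{sec-pre}.
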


For example, any oriented vector bundle over a CW complex of dimension $\leq 3$, will posses a tc structure.

{\footnotesize {\sc
  Instituto de Matem\'aticas, UNAM, Mexico City, Mexico}\\ 
  \emph{E-mail address}:
  \href{mailto:villarreal@matem.unam.mx}{\texttt{villarreal@matem.unam.mx}}}

\end{document}